\newtheorem{theorem}{Theorem}
\newtheorem{lemma}[theorem]{Lemma}
\newtheorem{cor}[theorem]{Corollary}
\newtheorem{proposition}[theorem]{Proposition}
\newtheorem{claim}{Claim}
\newenvironment{claimproof}[1]{\par\noindent\emph{Proof.}\space#1}{\hfill $\Diamond$\newline}
\theoremstyle{definition}
\theoremstyle{remark}
\newcommand*{\Z}{\mathbb{Z}}
\newcommand*{\N}{\mathbb{N}}
\newcommand{\Hyp}[3]{H^{(#1)}_{#2,#3}}
\newcommand*{\qq}{q}
\newcommand*{\rr}{r}
\newcommand{\eps}{\varepsilon}
\renewcommand\labelenumi{(\roman{enumi})}
\renewcommand\theenumi\labelenumi
\title{Discrepancy and large dense monochromatic subsets}
\author{Ross J. Kang\thanks{Department of Mathematics, IMAPP, Radboud University Nijmegen. Email: \texttt{ross.kang@gmail.com}. This work was initiated while this author was supported by a Veni grant from the Netherlands Organisation for Scientific Research (NWO).}\and Viresh Patel\thanks{Korteweg de Vries Institute for Mathematics, University of Amsterdam. Email: \texttt{vpatel@uva.nl}. Supported by NWO through the Gravitation Programme Networks (024.002.003). } \and Guus Regts\thanks{Korteweg de Vries Institute for Mathematics, University of Amsterdam. Email: \texttt{guusregts@gmail.com.} Supported by a personal NWO Veni grant.} }
\begin{document}

\maketitle
\abstract{
Erd\H{o}s and Pach (1983) introduced the natural degree-based generalisations of Ramsey numbers, where instead of seeking large monochromatic cliques in a $2$-edge coloured complete graph, we seek  monochromatic subgraphs of high minimum or average degree. Here we expand the study of these so-called quasi-Ramsey numbers in a few ways, in particular, to multiple colours and to uniform hypergraphs.
%In doing so, we make progress towards a recent question of Falgas-Ravry, Markstr\"om, and Verstra\"ete.

Quasi-Ramsey numbers are known to exhibit a certain unique phase transition and we show that this is also the case across the settings we consider. 
Our results depend on a density-biased notion of hypergraph discrepancy optimised over sets of bounded size, which may be of independent interest.
\\
\quad \\
\footnotesize{{{\bf Keywords.} Ramsey theory, quasi-Ramsey numbers, hypergraph discrepancy, probabilistic method}
\\\quad\\
\footnotesize{{\bf AMS subject classifications.} Primary, 05C55; Secondary, 05D10, 05D40

}

\section{Introduction}

Frank Plumpton Ramsey~\cite{Ram30} originally addressed the following question. Fixing $\qq,\rr\ge2$, for any $k$, is there always a finite $n$ such that in any assignment of $\qq$ colours to the $\rr$-element subsets of $[n]=\{1,\dots,n\}$, there is guaranteed to be a $k$-element subset of $[n]$ all of whose $\rr$-element subsets have the same colour? Ramsey's Theorem states that the answer is yes. The search for the smallest values $R^{(\rr)}_\qq(k)$ of $n$ in this question (the Ramsey numbers) is a central part of combinatorial mathematics. This search was begun in seminal papers by Erd\H{o}s and Szekeres~\cite{ErSz35} and Erd\H{o}s~\cite{Erd47} for the case $\qq=\rr=2$ showing that
\begin{align*}
\sqrt{2}^k \le R^{(2)}_2(k) \le 4^k.
\end{align*}
After decades, these remain very near to the best known bounds for this parameter.

When $\qq>2$ or $\rr>2$, our knowledge of the situation is even worse.
If $\qq>2$ and $\rr=2$, then $R^{(2)}_\qq(k)$ is exponential in $k$, but the best known bounds on the constants in the base of the exponential are weaker for larger $\qq$.
More significantly, if $\rr>2$ and $\qq=2$, then $R^{(\rr)}_2(k)$ is known to grow like a tower of exponentials in $k$~\cite{EHR65}, but the height of this tower is unknown and is subject to a \$500 Erd\H{o}s prize.
On the other hand, note that Erd\H{o}s and Hajnal (cf.~\cite{GRS80}) have shown that $\ln \ln R^{(3)}_4(k) = \Theta(k)$ as $k\to\infty$.

For $\qq=\rr=2$, Erd\H{o}s and Pach~\cite{ErPa83} formulated a natural degree-based generalisation of the Ramsey numbers.
Given $c\in [0,1]$, the basic question is as follows: for any $k$, what is the smallest $n:=R_c(k)$ such that for any graph $G=(V,E)$ on $n$ vertices there exists a subset $S\subseteq V$ of size $\ell$ at least $k$ such that either $G[S]$ or its complement $\overline{G}[S]$ has minimum degree at least $c(\ell-1)$?
We may also ask this question with average degree instead of minimum degree and denote the corresponding number $\overline{R}_c(k)$.
Clearly $\overline{R}_c(k) \le R_c(k)$ always.
We refer to $\overline{R}_c(k)$ and $R_c(k)$ as \emph{quasi-Ramsey numbers}.
Of course by taking $c=1$ we recover the classical two-colour Ramsey numbers for graphs.

Erd\H{o}s and Pach~\cite{ErPa83} found that the quasi-Ramsey numbers undergo a dramatic change in growth in $k$ in a narrow window around $c=1/2$: if $c<1/2$ then they have linear growth, while if $c>1/2$ they have singly exponential growth.
They developed a fairly precise understanding of the transition at the point $c=1/2$ --- the present authors together with Pach~\cite{KPPR15} and with Long~\cite{KLPR14+} have recently refined this.

Our purpose in the present paper is to extend the study of quasi-Ramsey numbers to multiple colours and uniform hypergraphs (as was initially considered by Ramsey).

We have been able to show that the precise transition behaviour at the point $c=1/2$ for $\rr=\qq=2$ is present in a similar way when $\rr>2$ or $\qq>2$.
The proofs of our results rely critically on a density-biased notion of hypergraph discrepancy which is optimised only over those vertex subsets up to a certain size. In fact the most difficult part of the paper is devoted to proving a bound for this type of discrepancy, cf. Theorem~\ref{thm:skew disc} below, which we believe to be of independent interest.

\subsection{Multi-colour quasi-Ramsey for graphs}

For the case $\rr=2$, we would first like to study the behaviour of quasi-Ramsey numbers if rather than two colours (namely the graph and its complement) there are $\qq \ge 2$ colours assigned to the edges of $K_n$. Motivated partly by related recent work by Falgas-Ravry, Markstr\"om, and Verstra\"ete~\cite{FMV15+}, we treat an even more general setting where each of the $\qq$ colours has an associated ``degree share''.
Based on Theorem~\ref{thm:skew disc} below, we prove the following in Section~\ref{sec:QR graphs}. 
\begin{theorem} \label{thm:graph multi-colour}
Fix $\qq\ge2$, $(\rho_1,\dots,\rho_\qq)\in (0,1)^\qq$ such that $\sum_{i=1}^\qq\rho_i=1$, and $\nu\ge 0$. Then there exists a constant $C=C(\nu)>0$ such that for each $k$ large enough and any $\qq$-colouring of the edges of the complete graph on at least $Ck\ln k$ vertices, there exists a colour $j$ and a subset $S$ of the vertices of size $\ell\ge k$ such that the subgraph induced by $S$ in colour $j$ has minimum degree at least $\rho_j(\ell-1)+\nu \sqrt{\ell-1}$.
\end{theorem}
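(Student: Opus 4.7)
The plan is to argue by contradiction, combining the density-biased discrepancy bound of Theorem~\ref{thm:skew disc} with a greedy low-degree vertex deletion. Suppose the conclusion fails: for every $S\subseteq V(K_n)$ with $|S|\geq k$ and every colour $j\in\{1,\dots,\qq\}$, there is $v\in S$ with $\deg_j^S(v)<\rho_j(|S|-1)+\nu\sqrt{|S|-1}$.

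Apply Theorem~\ref{thm:skew disc} to the $\qq$-coloured $K_n$ to extract a colour $j$ and a subset $S_0\subseteq V(K_n)$ of some size $\ell_0$ on which colour $j$ exhibits a strong density bias of the form
\[
e_j(S_0)\;\geq\;\rho_j\binom{\ell_0}{2}\;+\;\Phi(\ell_0),
\]
where $e_j(\cdot)$ counts induced colour-$j$ edges and $\Phi(\ell_0)$ is the excess delivered by the discrepancy bound. The hypothesis $n\geq Ck\ln k$ enters precisely here, providing the threshold above which $\Phi(\ell_0)$ is large enough to drive the rest of the argument.

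Fix this $j$, and iteratively delete vertices violating the min-degree condition: starting with $S\leftarrow S_0$, while $|S|\geq k$, pick and remove some $v\in S$ with $\deg_j^S(v)<\rho_j(|S|-1)+\nu\sqrt{|S|-1}$ (which exists by the contradiction hypothesis). If the process ever halts with $|S|\geq k$, then every vertex of $S$ satisfies the desired colour-$j$ bound, contradicting the standing assumption; so the process terminates with a set $S^*$ of size strictly less than $k$. Track the excess $f(S):=e_j(S)-\rho_j\binom{|S|}{2}$: each deletion at current size $\ell$ changes $f$ by $\rho_j(\ell-1)-\deg_j^S(v)>-\nu\sqrt{\ell-1}$, so in total
\[
f(S_0)-f(S^*)\;<\;\nu\sum_{\ell=k+1}^{\ell_0}\sqrt{\ell-1}\;\leq\;\tfrac{2\nu}{3}\,\ell_0^{3/2}.
\]
Since $f(S^*)\leq\binom{|S^*|}{2}\leq\binom{k-1}{2}$ and $f(S_0)\geq \Phi(\ell_0)$, we obtain a contradiction provided $C=C(\nu)$ is chosen so that $\Phi(\ell_0)$ exceeds $\tfrac{2\nu}{3}\ell_0^{3/2}+\binom{k-1}{2}$.

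The hard step is therefore Theorem~\ref{thm:skew disc} itself: it must deliver an excess $\Phi(\ell_0)$ whose leading constant strictly dominates $\tfrac{2\nu}{3}$ times $\ell_0^{3/2}$, with a spare margin of order $k^{2}$ to beat the trivial upper bound on $f(S^*)$. Calibrating the discrepancy theorem so that this strict domination holds from only $n=\Theta(k\ln k)$ vertices, in the full $\qq$-colour and density-biased $(\rho_1,\dots,\rho_{\qq})$ setting, is where the logarithmic factor in the bound is consumed and where the main technical effort of the paper must lie. Once Theorem~\ref{thm:skew disc} is in place with the right parameters, the deletion bookkeeping above closes the argument essentially mechanically.
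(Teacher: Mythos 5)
Your proposal has the right raw ingredients (Theorem~\ref{thm:skew disc} plus deletion of low-degree vertices), but the single-shot bookkeeping cannot close at $n=\Theta(k\ln k)$, and the gap is quantitative and unfixable within your scheme. You need $\Phi(\ell_0)>\tfrac{2\nu}{3}\ell_0^{3/2}+\binom{k-1}{2}$. The second term is $\Theta(k^2)$. But Theorem~\ref{thm:skew disc} with any size bound $t\le n$ yields a set of discrepancy only $\Theta\bigl(t^{3/2}\sqrt{\ln(n/t)}\bigr)$, and with $n=Ck\ln k$ this quantity is $o(k^2)$ for every admissible $t$: taking $t=\Theta(k)$ gives $k^{3/2}\sqrt{\ln\ln k}\ll k^2$, while taking $t=\Theta(n)$ gives $(k\ln k)^{3/2}\ll k^2$. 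In other words, a single set of high $\rho_j$-discrepancy simply cannot carry an excess of order $k^2$ once $n$ is only quasi-linear in $k$. Your argument would work at $n=\Theta(k^{4/3}\,\mathrm{polylog}\,k)$, but not at the claimed $\Theta(k\ln k)$. (There is a second, smaller issue: the witness set from Theorem~\ref{thm:skew disc} may have $|S_0|<k$, in which case your deletion loop never runs, and the discrepancy may carry the wrong sign for colour $j$; both are repairable by averaging over colours, but the $k^2$ barrier is not.)

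The paper avoids this barrier by never pitting one set's discrepancy against $\binom{k-1}{2}$. Instead it extracts a \emph{sequence} $X_1,X_2,\dots$ of sets, each chosen to maximise a skewed discrepancy $D_{j,\nu}(S)=D_{\rho_j,G_j}(S)-\nu|S|^{3/2}$ among all subsets of the remaining vertex pool (over all colours), then removed. The key local move is that if $X_i$ had a low-degree vertex $x$, deleting $x$ would strictly increase $D_{c(i),\nu}$, contradicting maximality --- so each $X_i$ \emph{automatically} has the required minimum degree, with no vertex-deletion phase needed (Claim~1 in the paper). The weight of the argument then shifts to a global counting claim: within a fixed colour class, the discrepancies of successively extracted sets decay geometrically (Claim~2), so their number is $O(\ln k)$; yet if every $|X_i|<k$, they cannot cover the $\Omega(n)$ vertices they are supposed to cover once $n\ge Ck\ln k$. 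Theorem~\ref{thm:skew disc} enters only to guarantee that each extracted discrepancy is large enough ($\Omega(k^{3/2}\sqrt{\ln\ln k})$, which easily dominates the skew term $\nu k^{3/2}$), not to beat $k^2$. This ``many moderately large discrepancies'' structure, rather than ``one huge discrepancy,'' is the idea missing from your proposal.
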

By a clever weighted random construction (cf.~also~\cite{KPPR15}), Erd\H{o}s and Pach proved that $R_{1/2}(k)= \Omega(k\ln k/\ln\ln k)$, which means that the quasi-Ramsey number bound  for $c=1/2$ implicit in Theorem~\ref{thm:graph multi-colour} is sharp up to a $\ln\ln k$ factor.

We remark that Theorem~\ref{thm:graph multi-colour} gives progress on a question posed by Falgas-Ravry, Markstr\"om, and Verstra\"ete~\cite{FMV15+}.
Given a graph $G$ on $n$ vertices with edge density $p$, they asked for the largest integer $m = g(G)$ such that $G$ contains an induced subgraph on at least $m$ vertices with minimum degree at least $p(m-1)$ (what they called a {\em full subgraph}) or with maximum degree at most $p(m-1)$ (a {\em co-full subgraph}).
In an earlier version of \cite{FMV15+}, the authors showed that if $p(1-p)\ge 1/n$ then $g(G)=\Omega(n/(\ln n)^2)$ for all graphs $G$, and asked whether this bound could be improved to $\Omega(n/\ln n)$. 
%(In fact, they asked if $g(n)$, the minimum of $g(G)$ over all $n$-vertex graphs $G$, satisfies $g(n) = \Theta(n/\ln n)$.) 
In the latest version of \cite{FMV15+}, they show $g(G)=\Omega(n/(\ln n))$ and no longer require the condition $p(1-p)\ge 1/n$ (see Theorem 4). Here (addressing the question in the earlier version) we obtain (a strengthening and generalisation of) the same result via Theorem~\ref{thm:graph multi-colour} and Corollary~\ref{cor:nu=0}.
Indeed, in the case where the edge density $p$ is fixed, Theorem~\ref{thm:graph multi-colour} is a strengthening since we can guarantee slightly higher degree than required by taking $\qq=2$ and $\rho_1=p$. It is a generalisation in the sense of allowing more colours.
In Section~\ref{sec:QR graphs}, we show that this $\Omega(n/\ln n)$ bound is also valid for non-constant $p$, cf.~Corollary~\ref{cor:nu=0}. %(i.e.~without the condition that $p(1-p)\ge 1/n$).
%We remark that in the most recent version of \cite{FMV15+}, the condition that $p(1-p)\ge 1/n$ is no longer required.

\subsection{Multi-colour quasi-Ramsey for hypergraphs}

The multicolour quasi-Ramsey investigation above naturally extends also to $\rr$-uniform hypergraphs, where we consider colourings of the hyperedges of the complete $\rr$-uniform hypergraph $K^{(\rr)}_n$ on $n$ vertices.
The \emph{degree} of a vertex is the number of hyperedges incident with the vertex.

As Ramsey numbers for hypergraphs are even less well understood than for graphs, despite a long history, one might expect the hypergraph quasi-Ramsey problem to put up significant resistance.
To the contrary, we have found that the precise threshold in quasi-Ramsey numbers for graphs established in~\cite{KPPR15} is present in an analogous way for hypergraphs.
Based on Theorem~\ref{thm:skew disc} below 
and a standard random construction
we establish the following result in Section~\ref{sec:QR hyper}.

\begin{theorem}\label{thm:sharp hyper}
Let $\rr\ge2$. Fix $\qq \ge 2$ and $(\rho_1,\dots,\rho_\qq)\in (0,1)^\qq$ with $\sum_{i=1}^\qq\rho_i=1$.
\begin{enumerate}
\item\label{itm:sharp hyper,upper}
Let $\nu\ge 0$. Then there exists a constant $C>0$ such that for each $\varepsilon>0$ and $k$ large enough,
for any $\qq$-colouring of the edges of the complete $\rr$-uniform hypergraph on at least $k^{\nu^2C^2(1+\varepsilon)+2\rr/(\rr+1)}$ vertices there exists a colour $j$ and a subset $S$ of the vertices of size $\ell\ge k$ such that the subhypergraph induced by $S$ in colour $j$ has minimum degree at least 
\begin{align*}
\rho_j\binom{\ell-1}{\rr-1}+\nu \sqrt{\ell^{\rr-1}\ln \ell}.
\end{align*}

\item\label{itm:sharp hyper,lower}
There is a constant $C>0$ such that, if $\nu(\cdot)$ is a non-decreasing non-negative function, then for each $k$ large enough there is a 
$\qq$-colouring of the edges of the complete $\rr$-uniform hypergraph on $Ck^{\nu(k)^2+1}$ vertices such that the following holds. For any colour $j$ and any subset $S$ of the vertices of size $\ell\ge k$, the subhypergraph induced by $S$ in colour $j$ has average degree less than
\begin{align*}
\rho_j\binom{\ell-1}{\rr-1}+\nu(\ell) \sqrt{\rr\binom{\ell-1}{\rr-1}\ln \ell}.
\end{align*}

\end{enumerate}
\end{theorem}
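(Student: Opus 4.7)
The plan is to address parts (i) and (ii) by fundamentally different methods: part (i) is a deterministic argument driven by Theorem~\ref{thm:skew disc}, while part (ii) follows from a random colouring and a union bound.

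For part (i), I would argue by contradiction via iterative peeling. Suppose, with $V := V(K_n^{(\rr)})$, that no subset $S$ of size $\ell \ge k$ realises a colour $j$ with colour-$j$ minimum degree at least $\rho_j\binom{\ell-1}{\rr-1}+\nu\sqrt{\ell^{\rr-1}\ln\ell}$. Then for each such $S$ and each $j$, some vertex in $S$ has its colour-$j$ degree in $S$ strictly below the threshold. The idea is to build a nested chain $V=V_0\supseteq V_1\supseteq\cdots$ by peeling off, at each stage $t$, a vertex $v_t$ deficient in a well-chosen colour $j_t$ of $V_t$. Naively summing the degree deficits should be weighed against the total edge count of $K_n^{(\rr)}$. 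When the naive bound fails to close, one applies Theorem~\ref{thm:skew disc} to the current colour-$j$ hypergraphs $H_j[V_t]$, which produces a small subset $T\subseteq V_t$ on which some colour $j_t$ is over- or under-represented. Over-representation forces a vertex of unusually large colour-$j_t$ degree, contradicting the assumption on $S=V_t$; under-representation lets us switch focus and peel more aggressively in another colour. Balancing the size of $T$ supplied by Theorem~\ref{thm:skew disc} against the required surplus $\nu\sqrt{\ell^{\rr-1}\ln\ell}$ should produce the claimed exponent $\nu^2C^2(1+\varepsilon)+2\rr/(\rr+1)$, with the $2\rr/(\rr+1)$ term reflecting the scale $\ell^{\rr/(\rr+1)}$ at which the hypergraph discrepancy bound bites.

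For part (ii), colour each hyperedge of $K_n^{(\rr)}$ independently with colour $j$ of probability $\rho_j$. For a fixed $S$ of size $\ell$ and colour $j$, the number $X_{S,j}$ of colour-$j$ hyperedges inside $S$ is $\text{Bin}(\binom{\ell}{\rr},\rho_j)$, so the average colour-$j$ degree of $S$ equals $\rr X_{S,j}/\ell$ and has expectation $\rho_j\binom{\ell-1}{\rr-1}$. A direct computation with Hoeffding's inequality shows that the probability the average exceeds its mean by $\nu(\ell)\sqrt{\rr\binom{\ell-1}{\rr-1}\ln\ell}$ is at most $\exp(-2\ell\,\nu(\ell)^2\ln\ell)=\ell^{-2\ell\nu(\ell)^2}$. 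A union bound over the $\qq\binom{n}{\ell}$ choices of $(S,j)$ at each size $\ell\ge k$, using $\binom{n}{\ell}\le(en/\ell)^\ell$, then succeeds provided $\ln n < (1+2\nu(\ell)^2-o(1))\ln\ell$ throughout $\ell\ge k$. Since $\nu$ is non-decreasing, the tightest constraint comes at $\ell=k$ and is comfortably met by the choice $n=Ck^{\nu(k)^2+1}$ for $C$ fixed and $k$ large enough.

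I expect the main obstacle to be part (i): correctly combining the peeling iteration with Theorem~\ref{thm:skew disc} so as to extract the right bounded-size witness, translate its density deviation into a high-degree vertex compatible with the threshold $\rho_j\binom{\ell-1}{\rr-1}+\nu\sqrt{\ell^{\rr-1}\ln\ell}$, and match the final exponent $\nu^2C^2(1+\varepsilon)+2\rr/(\rr+1)$. The additive $2\rr/(\rr+1)$, which is loose compared with the $k\ln k$ of the graph case (Theorem~\ref{thm:graph multi-colour}), strongly hints that the discrepancy bound is applied on sets of size $\ell^{\rr/(\rr+1)}$ and that stitching the resulting local deficits across $V_t$ absorbs the extra factor. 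The multi-colour bookkeeping over $\qq$ colours adds further technicality but should follow the two-colour template without new conceptual difficulty.
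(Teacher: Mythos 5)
Your proposal for part~(ii) is correct and follows essentially the same route as the paper: a random $\rho$-biased colouring, a concentration bound on the number of colour-$j$ edges inside a fixed set $S$ of size $\ell$, and a union bound over all sets of size at least $k$ and all colours. The paper derives the concentration bound via a large-deviations lemma (the Fenchel--Legendre transform $\Lambda^*_\rho$ and a Taylor expansion), while you apply Hoeffding's inequality directly to the binomial count $X_{S,j}\sim\mathrm{Bin}\bigl(\binom{\ell}{\rr},\rho_j\bigr)$; your arithmetic $\exp(-2\ell\nu(\ell)^2\ln\ell)=\ell^{-2\ell\nu(\ell)^2}$ is correct, and it in fact supports the slightly larger value $n=\Theta\bigl(k^{1+2\nu(k)^2}\bigr)$ (the paper only exploits $\Lambda^*_\rho(\rho+\eps)\ge\eps^2$, giving $k^{1+\nu(k)^2}$). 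Both comfortably cover the claimed bound.

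Your proposal for part~(i), however, has a genuine gap, and it is not the route the paper takes. The paper does not peel vertices: it runs a one-shot maximiser argument. Define the skew functional
\begin{align*}
D_{\nu,j}(S):=D_{\rho_j,H_j}(S)-\nu|S|^{(\rr+1)/2}\sqrt{\ln|S|},
\end{align*}
and let $X$ (and, say, colour $1$) attain its maximum over all $S\subseteq V$ and $j\in[\qq]$. A calculation as in Claim~\ref{claim:1}, using $\binom{\ell-1}{\rr}+\binom{\ell-1}{\rr-1}=\binom{\ell}{\rr}$, shows that if some vertex $x\in X$ had colour-$1$ degree in $X$ below $\rho_1\binom{|X|-1}{\rr-1}+\nu|X|^{(\rr-1)/2}\sqrt{\ln|X|}$, then $D_{\nu,1}(X\setminus\{x\})>D_{\nu,1}(X)$, contradicting maximality; thus $X$ automatically has the required minimum degree, and the only thing left is to show $|X|\ge k$, which follows since Theorem~\ref{thm:skew disc} (applied with $t=k^{2\rr/(\rr+1)}$ on $n\ge k^{\nu^2C^2(1+\varepsilon)+2\rr/(\rr+1)}$ vertices) forces $\max_{S,j}D_{\nu,j}(S)\ge k^{\rr}$. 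Your sketch instead peels off low-degree vertices one at a time and, when Theorem~\ref{thm:skew disc} produces a small set $T$ with anomalously high colour-$j_t$ density, claims this ``forces a vertex of unusually large colour-$j_t$ degree, contradicting the assumption on $S=V_t$.'' That step does not close: the assumption you negate is about a set of size $\ell\ge k$ in which \emph{every} vertex has large colour-$j$ degree, and neither a single high-degree vertex nor a high-density small set $T$ contradicts it --- a density surplus on $T$ only controls the average degree of $T$, not the minimum degree of any set of size at least $k$. The missing idea is precisely the maximiser trick: optimise a skew-discrepancy functional whose skew term $\nu|S|^{(\rr+1)/2}\sqrt{\ln|S|}$ has discrete derivative matching the wanted surplus $\nu\sqrt{\ell^{\rr-1}\ln\ell}$, so that maximality upgrades a density statement into a minimum-degree statement, and then read off the size from the discrepancy lower bound.
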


We note that if we wish to find induced subgraphs with exactly (rather than at least) $k$ vertices, then the following applies for for $\sum_{i=1}^\qq\rho_i<1$. The proof appears in Section~\ref{sec:QR hyper}.
\begin{proposition}\label{prop:linear hyper}
Let $\rr\ge2$. Fix $\qq \ge 2$ and $(\rho_1,\dots,\rho_\qq)\in [0,1]^\qq$ with $\sum_{i=1}^\qq\rho_i<1$.
Then there exists a constant $C>0$ such that for each $k$ large enough,
for any $\qq$-colouring of the edges of the complete $\rr$-uniform hypergraph on at most $Ck$ vertices there exists a colour $j$ and a subset $S$ of the vertices of size $k$ such that the subhypergraph induced by $S$ in colour $j$ has minimum degree at least $\rho_i\binom{k-1}{r-1}$.\end{proposition}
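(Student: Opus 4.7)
The plan is to reduce the multi-colour problem to a monochromatic density statement by pigeonhole, extract a dense subset on which every vertex has surplus degree via a peeling argument, and then pass to a $k$-subset by a random choice followed by concentration.

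Set $\delta := 1-\sum_{i=1}^{\qq}\rho_i > 0$ and $\gamma := \delta/\qq$. Since the colour densities $|E_j|/\binom{n}{\rr}$ sum to $1$ while $\sum_j \rho_j = 1-\delta$, by averaging there is a colour $j$ with $|E_j|/\binom{n}{\rr} \ge \rho_j + \gamma$. Writing $H$ for the colour-$j$ subhypergraph and $\rho := \rho_j$ for brevity, the task reduces to finding a $k$-vertex induced subhypergraph of $H$ whose minimum degree is at least $\rho\binom{k-1}{\rr-1}$, given $H$ of density at least $\rho + \gamma$ on $n=\Theta(k)$ vertices.

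Fix $\varepsilon := \gamma/2$ and run the following peeling starting from $V_0 := V$: at each step, remove any $v \in V_t$ with $d_H^{V_t}(v) < (\rho+\varepsilon)\binom{|V_t|-1}{\rr-1}$, halting when no such vertex remains. A Pascal-identity calculation shows that removing a strictly below-average-degree vertex leaves the density of $H[V_t]$ non-decreasing; inductively the density remains at least $\rho+\gamma > \rho+\varepsilon$, so the procedure halts at some set $W$ with $d_H^W(v) \ge (\rho+\varepsilon)\binom{|W|-1}{\rr-1}$ for every $v \in W$. Summing the degree thresholds of the removed vertices via the hockey-stick identity yields
\[
|E(H[W])| \ge (\gamma-\varepsilon)\binom{n}{\rr} + (\rho+\varepsilon)\binom{|W|}{\rr};
\]
combining this with the trivial inequalities $|E(H[W])| \le \binom{|W|}{\rr}$ and $(n/|W|)^{\rr} \le \binom{n}{\rr}/\binom{|W|}{\rr}$ forces $(n/|W|)^{\rr} \le (1-\rho-\varepsilon)/(\gamma-\varepsilon) =: C_0$, i.e.\ $|W| = \Omega(n)$. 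Choosing $C \ge 2C_0^{1/\rr}$ therefore ensures $|W| \ge 2k$.

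To finish, let $S$ be a uniform random $k$-subset of $W$. For each $v \in W$,
\[
\mathbb{E}[d_H^S(v) \mid v \in S] \;=\; d_H^W(v)\binom{k-1}{\rr-1}\Big/\binom{|W|-1}{\rr-1} \;\ge\; (\rho+\varepsilon)\binom{k-1}{\rr-1}.
\]
Since $d_H^S(v)$, viewed as a function of the random permutation defining $S$, has bounded-difference constant $\binom{k-1}{\rr-2}$ under transpositions, a McDiarmid-type inequality for random permutations gives $\Pr[d_H^S(v) < \rho\binom{k-1}{\rr-1} \mid v \in S] \le \exp(-\Omega(\varepsilon^2 k/(\rr^2 C)))$. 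A union bound over $v \in W$ weighted by $\Pr[v \in S] = k/|W|$ then bounds the expected number of ``bad'' $v \in S$ by $O(k\exp(-\Omega(\varepsilon^2 k)))$, which is $<1$ for $k$ large enough in terms of $\rho,\gamma,\rr,C$, and so some realisation of $S$ satisfies the minimum-degree requirement in colour $j$. I expect the delicate part to be the size bound $|W| = \Omega(n)$ in the peeling step: the trivial density inequality $|E(H[W])| \le \binom{|W|}{\rr}$ is precisely what converts the density surplus $\gamma$ into a host-size linear in $k$. The concentration step is a routine application of McDiarmid.
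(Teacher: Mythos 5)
Your proposal is correct and follows essentially the same route as the paper's proof: pigeonhole to find a colour with a density surplus, greedy peeling down to a linear-size set with uniformly large degree (controlled via the hockey-stick telescope and the trivial edge-count upper bound), and then a uniformly random $k$-subset whose vertex degrees are shown to concentrate by a McDiarmid-type bounded-differences inequality. Minor discrepancies (your Lipschitz constant $\binom{k-1}{\rr-2}$ versus the sharper $\binom{k-2}{\rr-2}$, and the extra $1/C$ in your exponent) are harmless; the paper also packages the concentration step as a separate lemma about $e(H[S])$ for a random $k$-set and then passes to the link of $v$ as an $(\rr-1)$-uniform hypergraph, whereas you phrase it directly as a permutation-martingale bound on $d_H^S(v)$, but the substance is the same.
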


The situation could be more nuanced if $\sum_{i=1}^\qq\rho_i>1$. It is of course unknown what precisely happens when $\sum_{i=1}^\qq\rho_i=\qq$, that is, the regime of the classical hypergraph Ramsey numbers, but we also do not know much for $1<\sum_{i=1}^\qq\rho_i<\qq$.
We will elaborate on this and state some open questions in Section~\ref{sec:remarks}.

\paragraph{Organisation.}
As mentioned above the proofs of our main results rely on a discrepancy result, which we state and prove in the next section. 
In Section~\ref{sec:QR graphs} we prove Theorem~\ref{thm:graph multi-colour} and in Section~\ref{sec:QR hyper} we prove Theorem~\ref{thm:sharp hyper} and Proposition~\ref{prop:linear hyper}.
We conclude with some remarks and open questions in Section~\ref{sec:remarks}.

\section{Discrepancy over sets of bounded size}
In this section we introduce our main tool, a $p$-discrepancy result for bounded sets in uniform hypergraphs, cf. Theorem~\ref{thm:skew disc} below.

Let $\rr\ge2$ and let $H=(V,E)$ be an $\rr$-uniform hypergraph.
For $p\in [0,1]$ and $S\subseteq V$, the \emph{$p$-discrepancy} of $S$ is defined as
\begin{align*}
D_p(S):=e(S)-p\binom{|S|}{\rr},
\end{align*}
the number of hyperedges in the subhypergraph induced by $S$ less a $p$ proportion of the total possible number of hyperedges on $S$.
For several $\rr$-uniform hypergraphs defined on the same vertex set, we specify $D_{p,H}(S)$.
The \emph{$p$-discrepancy} of $H$ is defined as
\begin{align}
D_p(H) := \max_{S\subseteq V} |D_p(S)|.\label{eq:disc df}
\end{align}
For the classic choice $p=1/2$, we usually refer to this just as discrepancy. 
If $p$ is chosen as $|E|/\binom{|V|}{r}$, the hyperedge density of $H$, then the $p$-discrepancy measures how uniformly the hyperedges are distributed over the vertices.

A well-known result of Erd\H{o}s and Spencer~\cite{ErSp72} states that there exists $C=C(\rr)>0$ such that, provided $n$ is large enough, the discrepancy of any $\rr$-uniform hypergraph $H=(V,E)$ on $n$ vertices satisfies
\begin{align}\label{eq:ES72}
D_{1/2}(H)\ge Cn^{(\rr +1)/2}.
\end{align}
This is sharp up to the choice of the constant $C$.
The same statement for $p$-discrepancy with $p=|E|/\binom{|V|}{\rr}$ was shown by Erd\H{o}s, Goldberg, Pach and Spencer~\cite{EGPS88} for $\rr=2$ and by Bollob\'as and Scott~\cite{BoSc06} for $\rr > 2$ (where the constant $C$ depends on $p$).

It is natural to wonder what happens when the sets over which the maximum is taken in \eqref{eq:disc df} all have a bounded number $t$ of vertices. 
Clearly, one can obtain a constant times $t^{(r+1)/2}$, but in fact one gains a little more.  
Below we prove the following, generalising the results of Erd\H{o}s and Spencer~\cite{ErSp72,ErSp74}, of Erd\H{o}s, Goldberg, Pach and Spencer~\cite{EGPS88} and, partially, of Bollob\'as and Scott~\cite{BoSc06}.
\begin{theorem}\label{thm:skew disc}
Let $\rr\ge2$. There exist constants $C,D>0$ such that for any $p\in (0,1)$ the following holds. For each $n$ large enough and all $(\ln n)/D\le t \le n$,
we have that any $\rr$-uniform hypergraph $H=(V,E)$ on $n$ vertices satisfies 
\begin{align}
\max_{{S\subseteq V, |S|\le t}} |D_p(S)| \ge C\min\{p,1-p\}t^{(\rr+1)/2}\sqrt{\ln(n/t)}.	\label{eq:skew disc}
\end{align}
\end{theorem}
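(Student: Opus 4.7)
The plan is a two-stage probabilistic argument. In the first stage I would establish a ``base'' discrepancy bound extending Bollob\'as--Scott to arbitrary $p$: any $\rr$-uniform hypergraph on $m$ vertices contains a subset $S$ with $|D_p(S)| \geq c\,\min(p,1-p)\,m^{(\rr+1)/2}$ for an absolute constant $c$. The standard route is a biased randomization: assign each vertex $v$ an independent weight $\sigma_v \in \{-p, 1-p\}$ with $\Pr[\sigma_v = 1-p] = p$ (so $\mathbb{E}[\sigma_v] = 0$ and $\mathbb{E}[\sigma_v^2] = p(1-p)$), and analyze the multilinear form $Y = \sum_F (\mathbf{1}[F \in E] - p) \prod_{v \in F} \sigma_v$. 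A Khintchine/hypercontractive comparison of $\mathbb{E}[Y^2]$ and $\mathbb{E}[Y^4]$ combined with Paley--Zygmund extracts a specific assignment $\sigma$ whose associated subset $S := \{v : \sigma_v = 1-p\}$ witnesses the claim, with the $\min(p,1-p)$ factor arising naturally from the variance $p(1-p)$.

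For the second stage, I would exploit the $\Omega(n/t)$-fold abundance of disjoint $t$-vertex ``windows'' inside $V$. Taking a random equipartition $V = V_1 \sqcup \cdots \sqcup V_N$ with $N \approx n/t$ and each $|V_i| \approx t$, the base bound applied to each $H[V_i]$ yields a candidate subset $S_i \subseteq V_i$ with $|D_p(S_i)| \geq cq\, t^{(\rr+1)/2}$, where $q = \min(p,1-p)$. I would then argue, either via a moment/hypercontractivity computation on $D_p(S)$ over random $t$-subsets coupled with an anti-concentration (small-ball) bound, or via an iterated boosting-over-scales refinement, that one of these candidates must in fact satisfy $|D_p(S_i)| \geq Cq\, t^{(\rr+1)/2}\sqrt{\ln N}$, which delivers the theorem.

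The principal obstacle is this second stage: the $D_p$-values across different blocks are not independent, and pushing the maximum from scale $t^{(\rr+1)/2}$ up to scale $t^{(\rr+1)/2}\sqrt{\ln N}$ requires more than sub-Gaussian upper-tail bounds --- it demands sufficient anti-concentration/spread of $D_p$ on random $t$-subsets, which for worst-case hypergraphs must be established carefully (most likely via a combination of hypercontractive higher-moment bounds and a small-ball inequality tailored to low-degree multilinear polynomials in biased Bernoulli variables). The hypothesis $t \geq (\ln n)/D$ presumably enters exactly at this step, as the anti-concentration/hypercontractive machinery typically loses a $\mathrm{poly}(\log)$ factor that must be absorbed before the $\sqrt{\ln(n/t)}$ gain becomes visible.
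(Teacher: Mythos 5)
Your plan diverges from the paper's argument, and the second stage has a structural gap that cannot be repaired within its own framework. The problem is not just the dependence between blocks that you flag: a random equipartition restricts the search to subsets contained in a \emph{single} block, and for the extremal example this already loses the logarithmic factor. Take $p=1/2$ and $H$ a random $\rr$-uniform hypergraph with edge density $1/2$. Inside any fixed block $V_i$ of size $t$, Chernoff plus a union bound over the $2^t$ subsets $S\subseteq V_i$ gives $\max_{S\subseteq V_i}|D_{1/2}(S)|=O(t^{(\rr+1)/2})$ with high probability, and maximising over the $N\approx n/t$ blocks enlarges the search space only to $N\cdot 2^t$ candidates, which under the hypothesis $t\ge(\ln n)/D$ recovers at most a constant factor, never $\sqrt{\ln(n/t)}$. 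No small-ball or anti-concentration inequality can rescue this, because the candidate sets are wrong from the start: the subsets realising the $\sqrt{\ln(n/t)}$ factor necessarily meet many blocks, and the equipartition discards them before the analysis begins. (Your stage-one base bound is plausible, but it is also not what the paper uses.)

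The paper's decomposition is asymmetric, and that is precisely where the logarithm comes from. Lemma~\ref{lem:almost there} partitions $V=A_1\cup\dots\cup A_\rr$ with $|A_1|=\dots=|A_{\rr-1}|\approx t/\rr$ but $|A_\rr|\ge n/2$. For each $a\in A_\rr$ the link $H_a$ is an $(\rr-1)$-partite $(\rr-1)$-uniform hypergraph on the small parts, and Lemma~\ref{lem:multi disc} --- proved inductively on $\rr$, with the Erd\H{o}s subset-sum anti-concentration estimate doing the work in the base case via Lemma~\ref{lem:1-disc} --- supplies the probabilistic statement that for a uniformly random tuple $(B_1,\dots,B_{\rr-1})$ the event $|D_{p,H_a}(B_1,\dots,B_{\rr-1})|\ge \min\{p,1-p\}\,c\,(t/\rr)^{(\rr-1)/2}\sqrt{\ln y}$ has probability at least $d/y$. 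Taking $y=n/t$ and averaging over the $\Omega(n)$ vertices $a\in A_\rr$ produces a single tuple for which $\Omega(t)$ of the links are good with a common sign; choosing $B_\rr$ to consist of $\Theta(t)$ such $a$'s and summing their contributions gives $|D_p(B_1,\dots,B_\rr)|\ge c'\min\{p,1-p\}\,t^{(\rr+1)/2}\sqrt{\ln(n/t)}$. A final inclusion--exclusion identity, \eqref{eq:Dunion}, converts this $\rr$-partite discrepancy into $|D_p(\bigcup_{i\in S}B_i)|$ for some $S\subseteq[\rr]$, yielding a single set of size at most $t$. You were right that anti-concentration is the crux, but it enters through this multipartite induction on link hypergraphs, not through hypercontractivity over random windows; and the constraint $t\ge(\ln n)/D$ serves to satisfy the $\ln y\le d_\rr t$ hypothesis of Lemma~\ref{lem:multi disc} with $y=n/t$, not to absorb a polylogarithmic loss from a small-ball estimate.
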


Note that $p$ in Theorem~\ref{thm:skew disc} is not assumed to be the density of the hypergraph. 
We also note that the case $p=1/2$ of Theorem~\ref{thm:skew disc} was proved for $\rr=2$ (i.e.\ graphs) and shown to be tight up to the choice of the constant $C$ by Erd\H{o}s and Spencer \cite[Theorem~7.1]{ErSp74}. A slightly stronger form for the hypergraph case (for $p=1/2$) was announced and its proof left as a ``difficult'' exercise in~\cite[Chapter~7]{ErSp74}.
To the best of our knowledge no proof has been published.
Although Theorem~\ref{thm:skew disc} suffices for our purposes, for $p$ varying as a function of $n$, there is still room for potential improvement in the bound, since a random $\rr$-uniform hypergraph with edge density $p$ supplies an upper bound example with instead the factor $\min\{\sqrt{p},\sqrt{1-p}\}$ in~\eqref{eq:skew disc}.
%We remark that Theorem~\ref{thm:skew disc} is sharp up to the choice of $C$ over the entire range of $t$, by considering a random $\rr$-uniform hypergraph with edge density $p$. (This is essentially another exercise in~\cite[Chapter~7]{ErSp74}, one that was not indicated as ``difficult''.)

\subsection{Proof of Theorem~\ref{thm:skew disc}}\label{sec:disc}
Our proof may be viewed as an extension of the proof of Erd\H{o}s and Spencer~\cite{ErSp72} of \eqref{eq:ES72}.
We will first prove several lemmas extending lemmas from~\cite{ErSp72}.
We start with the following adaptation of~\cite[Lemma~2]{ErSp72}. 
\begin{lemma}\label{lem:1-disc}
Fix $c>0$. Then, for all $m$, all $y\geq 2$ such that $\ln y\leq cm/4$, and any choice of real numbers $x_1,\dots,x_m$ satisfying $|x_i|\ge 1$ for at least $cm$ of the $i\in[m]$, we have
\begin{align}
\left| \sum_{i\in V}x_i\right|\ge 4^{-1}\sqrt{cm \ln (y)},	    \label{eq:1-disc}
\end{align}
for at least $(8y)^{-1} 2^{m}$ choices of $V\subseteq [m]$.
\end{lemma}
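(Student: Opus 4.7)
The plan is to recast the statement as an anti-concentration bound for a Rademacher sum and reduce it, via two symmetrization steps, to the standard Gaussian-scale tail of the simple symmetric random walk. Write $\sigma_i := 2\mathbf{1}_{i \in V} - 1$, so the $\sigma_i$ are i.i.d.\ Rademacher when $V$ is uniform on $2^{[m]}$. Setting $T := \sum_i \sigma_i x_i$ and $\mu := \sum_i x_i$, the identity $\sum_{i \in V}x_i = (T+\mu)/2$ reduces the claim to $\Pr[|T+\mu| \ge 2t] \ge (8y)^{-1}$ with $t := \tfrac{1}{4}\sqrt{cm\ln y}$.

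Let $I := \{i : |x_i| \ge 1\}$, so $|I| \ge cm$, and decompose $T = T_I + T_{I^c}$ with $T_I := \sum_{i \in I}\sigma_i x_i$. I would condition on $(\sigma_i)_{i \notin I}$, making $\nu := T_{I^c} + \mu$ a constant. The parallelogram identity gives $\max(|T_I + \nu|, |T_I - \nu|) \ge |T_I|$, and combined with the distributional symmetry of $T_I$ this yields
\[
\Pr\bigl[|T_I + \nu| \ge 2t \mid (\sigma_i)_{i \notin I}\bigr] \ge \tfrac{1}{2}\Pr\bigl[|T_I| \ge 2t\bigr],
\]
so $\Pr[|T+\mu| \ge 2t] \ge \tfrac{1}{2}\Pr[|T_I| \ge 2t]$. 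After absorbing $\mathrm{sign}(x_i)$ into $\sigma_i$ for $i \in I$ so that $x_i \ge 1$ there, the Kahane--Khintchine contraction inequality applied with $a_i := 1/x_i \in (0,1]$ gives $\Pr[|S_n| \ge 2t] \le 2\Pr[|T_I| \ge 2t]$, where $n := |I|$ and $S_n := \sum_{i \in I}\sigma_i$ is the simple symmetric random walk.

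For the remaining tail, Slud's inequality (applicable since $2t \le cm/4 \le n/4$) combined with symmetry gives $\Pr[|S_n| \ge 2t] \ge 2\bar\Phi(2t/\sqrt n)$. Since $n \ge cm$ and $2t = \tfrac{1}{2}\sqrt{cm\ln y}$, $2t/\sqrt n \le \tfrac{1}{2}\sqrt{\ln y}$, and the Mills-ratio lower bound $\bar\Phi(u) \ge \tfrac{u}{\sqrt{2\pi}(u^2+1)}e^{-u^2/2}$ gives $\bar\Phi(\tfrac{1}{2}\sqrt{\ln y}) = \Omega(y^{-1/8}/\sqrt{\ln y})$. Chaining the estimates, $\Pr[|T+\mu| \ge 2t] \ge \tfrac{1}{4}\Pr[|S_n| \ge 2t] \ge (8y)^{-1}$ once $y$ exceeds an absolute constant. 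For smaller $y$ the target $(8y)^{-1}$ is itself bounded below by a positive constant, and Paley--Zygmund applied to $T_I^2$ (using $\mathbb{E}T_I^2 \ge cm$ together with Khintchine's bound $\mathbb{E}T_I^4 \le 3(\mathbb{E}T_I^2)^2$) supplies the required constant lower bound.

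The principal obstacle is the reduction $\Pr[|T_I| \ge 2t] \ge \tfrac{1}{2}\Pr[|S_n| \ge 2t]$ with an $O(1)$ loss: a naive one-coordinate-at-a-time replacement $x_i \to 1$ loses a factor of $\tfrac{1}{2}$ per coordinate and thus $2^{-n}$ in total, which is exponentially worse than the target $(8y)^{-1}$ when $\ln y$ is close to $cm/4$. Kahane's contraction inequality is the essential ingredient that realises this reduction with only a constant multiplicative loss. The companion difficulty is that the lower tail for $S_n$ must be Gaussian-accurate at $\Theta(\sqrt{\ln y})$ standard deviations, so Berry--Esseen is too crude (its error terms do not see the exponential decay) and one must appeal to Slud's sharp bound or an equivalent Stirling-based computation.
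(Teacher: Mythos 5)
Your overall strategy (reduce to the ``heavy'' coordinates, then prove anti-concentration) is in the same spirit as the paper, but the paper gets there by a more direct route and your key analytic step has a genuine flaw. The paper uses Erd\H{o}s's 1945 Littlewood--Offord theorem, which says that for any target interval of length $2\sqrt{c_1}$ the number of $V_1\subseteq[cm]$ with $\phi(V_1)$ inside it is at most the sum of the $\approx 2\sqrt{c_1}$ middle binomial coefficients $\binom{cm}{r}$; this handles the shift by $\phi(V_2)$ ``for free'' (no symmetrization, no contraction, no lost factors of $2$), and the remaining work is a lower bound on the complementary binomial tail. For that they use the elementary estimate $\sum_{|r-n/2|>s}\binom{n}{r}\ge\frac{2}{15}\,2^{n}e^{-16s^2/n}$ for $s\le n/8$ (Matou\v{s}ek--Vondr\'ak, Prop.~7.3.2), whose deliberately generous exponent $16s^2/n$ (rather than $2s^2/n$) is exactly what makes it hold with a clean constant and no polynomial prefactor over the full range $s\le n/8$, i.e.\ $\ln y\le cm/4$.

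Your step $\Pr[|S_n|\ge 2t]\ge 2\bar\Phi(2t/\sqrt n)$ via ``Slud's inequality'' is where the proof breaks. Slud's theorem for $p=1/2$ only applies for $k\le n(1-p)=n/2$, which is vacuous above the mean, and this is not a technicality: the claimed Gaussian lower bound is genuinely false for the symmetric walk once $2t$ is a constant fraction of $n$, which does occur here because $\ln y$ may be as large as $cm/4$, forcing $2t$ up to $n/4$. Indeed, writing $2t=\alpha n$, one has $\Pr[S_n\ge 2t]\approx e^{-nI(\tfrac12+\tfrac\alpha2)}$ with $I(\tfrac12+\tfrac\alpha2)>2(\tfrac\alpha2)^2$ strictly (the binomial rate function exceeds its quadratic approximation), whereas $\bar\Phi(2t/\sqrt n)\approx\mathrm{poly}(n)^{-1}e^{-2(\alpha/2)^2 n}$, so the binomial tail is exponentially \emph{smaller} than the Gaussian tail there; the inequality you need goes the wrong way. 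Because the Lemma must hold for all $y$ up to $e^{cm/4}$, you cannot restrict to the CLT regime, and the Paley--Zygmund patch you propose for ``small'' $y$ only covers $y\lesssim e^{3}$ while your Gaussian argument needs $y$ ``large enough,'' leaving a gap. Replacing Slud with a Matou\v{s}ek--Vondr\'ak-type bound would close the main hole, but note you would then land around $\frac14\cdot\frac{2}{15y}=\frac{1}{30y}$ rather than $\frac{1}{8y}$, because your conditioning/symmetrization step and your contraction step each cost a factor of $2$ that the Littlewood--Offord route avoids by building the shift directly into the anti-concentration statement.
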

\begin{proof}
For simplicity, let us assume that $cm\in\N$ and that $x_1,\dots,x_{cm}$ all have absolute value at least $1$.
For $V\subseteq [m]$ set $\phi(V):=\sum_{i\in V} x_i$, $V_1:=V\cap[cm]$ and $V_2:=V\setminus V_1$.
Then $\phi(V)=\phi(V_1)+\phi(V_2)$. 
Set $c_1=16^{-1}cm\ln y$.
Then~\eqref{eq:1-disc} does not hold if and only if $\phi(V_1)\in (-\phi(V_2)- \sqrt{c_1},-\phi(V_2)+ \sqrt{c_1})$.
By a result of Erd\H{o}s~\cite{Erd45} this holds for fixed $V_2$ for at most 
\[
\sum_{r \; : \; |r-\frac{cm}{2}|\le  \sqrt{c_1}} \binom{cm}{r}	\]
choices of $V_1\subseteq [cm]$.
Since $\sqrt{c_1}\le cm/8$ by assumption on $y$, it follows from elementary arguments, cf.~Proposition~7.3.2 in the lecture notes of Matou\v{s}ek and Vondr\'ak~\cite{MaVonotes}, that
\[
2^{cm}-\sum_{r \; : \; |r-\frac{cm}{2}|\le  \sqrt{c_1}} \binom{cm}{r}	\geq \frac{2^{cm+1}}{15}\exp\left(\frac{-16c_1}{cm}\right)=\frac{2^{cm+1}}{15y}>\frac{2^{cm}}{8y}.
\]
In other words, for fixed $V_2$, we have for at least $(8y)^{-1}2^{cm}$ choices of $V_1$ that~\eqref{eq:1-disc} holds.
Now summing over all possible $V_2$ proves the lemma.
\end{proof}

We will next prove a result about $\rr$-partite $\rr$-uniform hypergraphs, or $(\rr,\rr)$-graphs for short.
Recall that an $\rr$-uniform hypergraph $H=(V,E)$ is said to be \emph{$\rr$-partite} if there exists a partition of $V$ into $\rr$ sets $V_1,\dots,V_{\rr}$ such that every hyperedge $e$ intersects  all of the $V_i$ exactly once. In this case we sometimes say $H$ is an $(\rr,\rr)$-graph on $V_1 \cup \cdots \cup V_r$.
For an $\rr$-uniform hypergraph $H=(V,E)$ and pairwise disjoint subsets $S_1,\dots, S_\rr\subseteq V$, define 
$e(S_1,\dots,S_\rr)$ to be the number of hyperedges of $H$ that have exactly one endpoint in $S_i$ for $i\in[\rr]$.
Then define 
\begin{align*}
D_p(S_1,\dots,S_\rr):=e(S_1,\dots,S_\rr)-p\prod_{i=1}^{\rr} |S_i|.
\end{align*}
The next lemma extends~\cite[Lemma~1]{ErSp72}.

\begin{lemma}\label{lem:multi disc}
Let $\rr\ge2$ and $p\in (0,1)$. There exists constants $c_\rr,d_\rr>0$ such that for all $t$ large enough, all $y\geq 2$ such that $\ln y\leq d_{r}t$, and any $(\rr,\rr)$-graph $H$ on $A_1\cup \dots
\cup A_\rr$, with $|A_i|=t$ for each $i$, we have $|D_p(B_1,\dots,B_\rr)|\ge \min\{p,1-p\}c_\rr t^{\rr/2}\sqrt{\ln y}$ for at least $y^{-1}d_\rr2^{t\rr}$ choices of subsets $B_i\subseteq A_i$, $i\in[\rr]$.
\end{lemma}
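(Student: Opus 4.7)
The plan is to prove Lemma~\ref{lem:multi disc} by induction on $\rr \ge 2$, using two applications of Lemma~\ref{lem:1-disc} at each stage. An inner application with its parameter set to a fixed constant (say $2$) guarantees a positive fraction of ``good'' configurations for each fixed vertex; an outer application with the target parameter $y$ then harvests the entire $\sqrt{\ln y}$ factor in one shot. The key to making the induction succeed is to avoid applying the inductive hypothesis at the target parameter $y$---which would lose a factor of $1/y$ in the proportion of good vertices per step---and instead use the inductive hypothesis with a constant parameter throughout.

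For the base case $\rr = 2$, consider a bipartite graph $H$ on $A_1 \cup A_2$. For each $v \in A_2$, set $x_u^{(v)} := \mathbf{1}_{uv \in E(H)} - p \in \{-p, 1-p\}$ for $u \in A_1$; then $|x_u^{(v)}| \ge \min(p, 1-p)$ always, and $y_v(B_1) := \sum_{u \in B_1} x_u^{(v)} = e(\{v\}, B_1) - p|B_1|$. Applying Lemma~\ref{lem:1-disc} (scaled by $\min(p, 1-p)$) to $(x_u^{(v)})_{u \in A_1}$ with $m = t$, $c = 1$, and parameter $2$ gives, for each $v$, at least $2^t/16$ subsets $B_1$ with $|y_v(B_1)| \ge \min(p, 1-p) \sqrt{t \ln 2}/4$. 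Averaging over $v$, a uniformly random $B_1$ has at least $t/16$ good $v$ in expectation; since this count is trivially bounded by $t$, a reverse Markov step produces a $1/32$ fraction of $B_1$'s for which at least $t/32$ vertices $v \in A_2$ are good. For any such $B_1$, a second application of Lemma~\ref{lem:1-disc} to the reals $(y_v(B_1))_{v \in A_2}$, now with $c = 1/32$ and parameter $y$, yields at least $(8y)^{-1} 2^t$ subsets $B_2$ for which $|D_p(B_1, B_2)| = |\sum_{v \in B_2} y_v(B_1)| \ge c_2 \min(p, 1-p)\, t \sqrt{\ln y}$, valid whenever $\ln y \le d_2 t$ for absolute constants $c_2, d_2 > 0$.

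For the inductive step, assume the lemma for $\rr - 1$, and consider an $(\rr, \rr)$-graph $H$ on $A_1 \cup \cdots \cup A_\rr$. The identity
\[
D_p(B_1, \ldots, B_\rr) = \sum_{v \in B_\rr} y_v(B_1, \ldots, B_{\rr-1}), \qquad y_v := D_{p, H_v}(B_1, \ldots, B_{\rr-1}),
\]
where $H_v$ is the link $(\rr-1, \rr-1)$-graph on $A_1 \cup \cdots \cup A_{\rr-1}$ (with hyperedges those $(u_1, \ldots, u_{\rr-1})$ satisfying $\{u_1, \ldots, u_{\rr-1}, v\} \in E(H)$), reduces the problem to discrepancy in the links plus an outer sum. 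Applying the inductive hypothesis to each $H_v$ with the constant parameter $2$ gives, for each $v \in A_\rr$, at least $(d_{\rr-1}/2) \cdot 2^{t(\rr-1)}$ tuples $(B_1, \ldots, B_{\rr-1})$ with $|y_v| \ge L := c_{\rr-1} \min(p, 1-p) t^{(\rr-1)/2} \sqrt{\ln 2}$. Exactly as in the base case, averaging over $v$ and reverse Markov yield at least $(d_{\rr-1}/4) \cdot 2^{t(\rr-1)}$ tuples for which at least $(d_{\rr-1}/4) t$ vertices $v \in A_\rr$ are good. A final application of Lemma~\ref{lem:1-disc} to $(y_v)_{v \in A_\rr}$, with $c = d_{\rr-1}/4$ and parameter $y$, then gives, for $(8y)^{-1} 2^t$ subsets $B_\rr$,
\[
|D_p(B_1, \ldots, B_\rr)| \ge \frac{L \sqrt{d_{\rr-1} t \ln y}}{8} = \frac{c_{\rr-1} \sqrt{d_{\rr-1} \ln 2}}{8}\, \min(p, 1-p)\, t^{\rr/2} \sqrt{\ln y},
\]
valid whenever $\ln y \le d_\rr t$; multiplying counts across the two stages closes the induction with a positive constant $d_\rr$.

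The main obstacle is the careful bookkeeping of constants through the induction. A naive attempt that applies the inductive hypothesis with the target parameter $y$ at each of the $\rr - 1$ inner levels shrinks the fraction of good vertices per step by $1/y$, and one winds up with a bound of order $t^{\rr/2} \ln y / y^{(\rr - 1)/2}$---hopeless for any $y$ growing with $t$. Fixing the inner parameter to a constant (here $2$) keeps a positive, $y$-independent fraction of good vertices and good tuples at every inductive level, and allows the entire $\sqrt{\ln y}$ amplification to be collected in the single outermost application of Lemma~\ref{lem:1-disc}.
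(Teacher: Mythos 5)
Your proof is correct and takes essentially the same approach as the paper's: induction on $\rr$ via link $(\rr-1,\rr-1)$-graphs, applying the inductive hypothesis at a constant $y$-parameter (you use $2$, the paper uses $e$) so that a positive, $y$-independent fraction of tuples survives each level, followed by an averaging/reverse-Markov step and a final application of Lemma~\ref{lem:1-disc} at parameter $y$ to collect the $\sqrt{\ln y}$ factor. The only cosmetic difference is that you start the induction at $\rr=2$ rather than $\rr=1$, which is harmless since your $\rr=2$ base case is structurally just the inductive step applied once.
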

\begin{proof}
The proof is by induction on $\rr$.
In case $\rr=1$, we have for a $(1,1)$-graph $H=(V,E)$ and $S\subseteq V$ that $D_p(S)=\sum_{i\in S}x_i$, with $x_i=1-p$ if $i\in E$ and $x_i=-p$ if $i\notin E$.
Let $\hat{p}=\min\{p,1-p\}$. Then $|x_i/\hat{p}|\ge 1$ for all $i\in V$.
So by Lemma~\ref{lem:1-disc} it follows that for at least $(8y)^{-1}2^{t}$ choices of $S\subseteq V$ we have $|D_p(S)|\ge 4^{-1}\hat{p}\sqrt{t\ln y}$.
The base case holds with $d_1=8^{-1}$ and $c_1=4^{-1}$.

Now assume $\rr>1$.
For any fixed $a\in A_\rr$ we can form a $(\rr-1,\rr-1)$-graph $H_a$ on $A_1\cup\cdots\cup A_{\rr-1}$ by letting $e\in E(H_a)$ if and only if $e\cup\{a\}\in E(H)$.
Define
\begin{align*}
Y:=\{(B_1,\dots,B_{\rr-1},a)\mid B_j\subseteq A_j, a\in A_\rr, |D_{p,H_a}(B_1,\dots, B_{\rr-1})|\ge \hat{p}c_{\rr-1}t^{(\rr-1)/2}\}
\end{align*}
By induction, for $t$ large enough, we know that for any $a\in A_{\rr}$ 
\begin{align*}
\left |\{(B_1,\dots,B_{\rr-1})\mid B_j\subseteq A_j, |D_{p,H_a}(B_1,\dots,B_{\rr-1})|\ge \hat{p}c_{\rr-1}t^{(\rr-1)/2}\}\right| \ge d_{\rr-1}2^{t(\rr-1)}/e.
\end{align*}
(Here, we have applied the statement for $(\rr-1,\rr-1)$-graphs with $y=e$.)
Let us write $d=e^{-1}d_{\rr-1}$. 
So $|Y|\ge dt2^{t(\rr-1)}$. 
This implies that out of the $2^{t(\rr-1)}$ choices of $(B_1,\dots,B_{\rr-1})$ at least $\frac12 d 2^{t(\rr-1)}$ of them satisfy that $|\{a\in A_{\rr}:|D_{p,H_a}(B_1,\dots, B_{\rr-1})|\ge \hat{p}c_{\rr-1}t^{(\rr-1)/2}\}|\ge dt/2$.
(Otherwise, $|Y| < \frac12 d 2^{t(\rr-1)}|A_\rr| + 2^{t(\rr-1)} dt/2 < dt2^{t(\rr-1)}$, a contradiction.)
Fix such a $(B_1,\dots,B_{\rr-1})$ and define for $a\in A_{\rr}$
\begin{align*}
x_a=\frac{D_{p,H_a}(B_1,\dots,B_{\rr-1})}{\hat{p}c_{\rr-1}t^{(\rr-1)/2}}.
\end{align*}
Then $|x_a|\ge1$ for at least $dt/2$ of the $a$ in $A_\rr$.
By Lemma~\ref{lem:1-disc}, we have, for $\ln y\leq dt/8$, for at least $(8y)^{-1} 2^t$ choices of $B_{\rr}\subseteq A_{\rr}$ that
\begin{eqnarray}
|D_p(B_1,\dots,B_{\rr-1},B_{\rr})|&=&\left|\sum_{a\in B_\rr}e_{H_a}(B_1, \ldots ,B_{\rr-1})-p\prod_{i=1}^\rr |B_i|\right|	\nonumber
\\
&=&\left|\sum_{a\in B_{\rr}}D_{p,H_a}(B_1,\dots,B_{\rr-1})\right| =\hat{p}c_{\rr-1}t^{(\rr-1)/2}\left|\sum_{a\in B_{\rr}}x_a\right| \nonumber
\\
&\ge& t^{\rr/2}\hat{p}c_{\rr-1}\sqrt{32^{-1}d\ln y}. \label{eq:k-1 bound}
\end{eqnarray}
As this holds for at least $\frac12 d2^{t(\rr-1)}$ choices of $(B_1,\dots,B_{\rr-1})$, it follows that~\eqref{eq:k-1 bound}
holds for at least $d(16y)^{-1}2^{t\rr}=d_ry^{-1}2^{tr}$ choices of $(B_1,\dots,B_\rr)$.
So setting, $d_r=d/16$ and $c_\rr=c_{\rr-1}\sqrt{32^{-1}d}$, the proof is finished.
\end{proof}

\begin{lemma}\label{lem:almost there}
Let $\rr\ge2$ and $p\in (0,1)$.
There exists constants $c'_\rr>0, d'_\rr>0$ such that, for each $n$ large enough and any $t$ satisfying $(\ln n)/d'_\rr\le t\le n/2$, each $\rr$-uniform hypergraph $H=(V,E)$ on $n$ vertices has pairwise disjoint subsets $B_1,\dots,B_\rr\subseteq V$ with $|B_i|\le t/\rr$ for all $i$ such that 
\begin{align*}|D_p(B_1,\dots,B_\rr)|\ge \min\{p,1-p\}c'_\rr t^{(\rr+1)/2}\sqrt{\ln(n/t)}.\end{align*}
\end{lemma}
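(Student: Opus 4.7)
The plan is to extend the inductive strategy of Lemma~\ref{lem:multi disc} by working with an \emph{asymmetric} partition of $V$. I would partition $V$ so that $A_1,\dots,A_{r-1}$ each have size $t/r$, and the remaining part $A_r := V\setminus(A_1\cup\cdots\cup A_{r-1})$ has size at least $n/2$ (possible since $t \le n/2$). The extra room in $A_r$, relative to the symmetric setup of Lemma~\ref{lem:multi disc}, is precisely what will produce the additional $\sqrt{\ln(n/t)}$ factor.

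Following the proof of Lemma~\ref{lem:multi disc}, for each $a \in A_r$ I form the $(r-1,r-1)$-graph $H_a$ on $A_1 \cup \cdots \cup A_{r-1}$ with $e \in E(H_a)$ iff $e \cup \{a\} \in E(H)$. Setting $\hat{p} := \min\{p,1-p\}$, I would then apply Lemma~\ref{lem:multi disc} to each $H_a$ with the role of its size parameter played by $t/r$ and with $y := n/t$; the constraint $\ln y \le d_{r-1}(t/r)$ becomes $\ln(n/t) \le d_{r-1} t/r$, which is implied by the hypothesis $t \ge (\ln n)/d'_r$ provided $d'_r \le d_{r-1}/r$. This delivers, for each $a \in A_r$, at least $d_{r-1} 2^{(r-1)t/r}(t/n)$ tuples $(B_1,\dots,B_{r-1})$ with $B_i \subseteq A_i$ satisfying $|D_{p,H_a}(B_1,\dots,B_{r-1})| \ge c_{r-1}\hat{p}(t/r)^{(r-1)/2}\sqrt{\ln(n/t)}$. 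A double-counting argument mirroring the one in the proof of Lemma~\ref{lem:multi disc} then produces a single tuple $(B_1,\dots,B_{r-1})$ that is good for at least $|A_r| \cdot d_{r-1} t/n \ge d_{r-1} t/2$ vertices $a \in A_r$.

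Next I would apply a pigeonhole on the sign of $D_{p,H_a}(B_1,\dots,B_{r-1})$: at least $d_{r-1}t/4$ of the good $a$'s share a common sign. Letting $B_r$ be any subset of size $s := \min\{t/r,\, d_{r-1}t/4\}$ drawn from these same-sign vertices, the identity $D_p(B_1,\dots,B_r) = \sum_{a \in B_r} D_{p,H_a}(B_1,\dots,B_{r-1})$ (which holds because the $A_i$ are pairwise disjoint) yields
\[
|D_p(B_1,\dots,B_r)| \;\ge\; s \cdot c_{r-1}\hat{p}(t/r)^{(r-1)/2}\sqrt{\ln(n/t)} \;\ge\; c'_r\,\hat{p}\, t^{(r+1)/2}\sqrt{\ln(n/t)},
\]
which is the required conclusion. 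The main obstacle is identifying the correct value of $y$ in Lemma~\ref{lem:multi disc}: taking $y$ too large would collapse the per-$a$ family of good tuples below one and break the double-counting step, whereas $y$ too small would leave the per-vertex discrepancy too weak to recover the extra $\sqrt{\ln(n/t)}$ after summing over $B_r$. The sweet spot $y = n/t$ emerges precisely from balancing these competing constraints against the size cap $|B_r| \le t/r$ and the lower bound on $t$ coming from the hypothesis.
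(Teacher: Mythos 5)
Your proposal follows essentially the same route as the paper's proof: an asymmetric partition of $V$ into $A_1,\dots,A_{r-1}$ of size roughly $t/r$ and a large $A_r$ of size at least $n/2$, an application of Lemma~\ref{lem:multi disc} to each link hypergraph $H_a$ with $y = n/t$, a first-moment (double-counting) step to locate a single tuple $(B_1,\dots,B_{r-1})$ good for at least $d_{r-1}t/2$ vertices of $A_r$, a sign pigeonhole to extract $d_{r-1}t/4$ same-sign vertices, and a final summation over a size-capped $B_r$. The only cosmetic differences are that the paper phrases the double counting probabilistically (via $\mathbb{E}(X(\mathbf{B}))$) and fixes $d\le 8/r$ to ensure $dt/4 \le t/r$ directly, whereas you take $|B_r| = \min\{t/r,\,d_{r-1}t/4\}$, which achieves the same thing.
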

The proof of this lemma is based on ideas from~\cite[Chapter~7]{ErSp74}.
\begin{proof}
Write $\hat{p}=\min\{p,1-p\}$ and write $\alpha=\lfloor t/\rr\rfloor^{(\rr-1)/2}\sqrt{\ln(n/t)}$.
Now partition $V$ into $\rr$ pairwise disjoint sets $A_1,\dots,A_\rr$ with $A_1,\dots,A_{\rr-1}$ each of size $\lfloor t/\rr\rfloor$ and $A_\rr$ of size $n-(\rr-1)\lfloor t/\rr\rfloor\ge n/2$.
For $a\in A_\rr$, let $H_a$ be the $(\rr-1,\rr-1)$-graph on $A_1\cup\cdots\cup A_{\rr-1}$ with $e\in E(H_a)$ if $e\cup\{a\}\in E(H)$. 

Let $c=c_{\rr-1}$ and $d=d_{\rr-1}>0$ be the constants from Lemma~\ref{lem:multi disc}. 
Setting $d'_r=\frac{d}{r+1}$, we see that we may apply Lemma~\ref{lem:multi disc} to $H_a$ with $y=n/t$ to find that when selecting ${\bf B}_i\subseteq A_i$ independently and uniformly at random for $i\in[\rr-1]$, then $|D_{p,H_a}({\bf B}_1,\dots ,{\bf B}_{\rr-1})|>\hat{p}c\alpha$ with probability at least $dt/n$ for each $a\in A_\rr$.
We may assume that $d \le 8/\rr$.
For convenience write ${\bf B}=({\bf B}_1,\dots,{\bf B}_{\rr-1})$ and let 
\begin{align*}
X({\bf B}):=|\{a\in A_\rr\mid |D_{p,H_a}({\bf B})|> \hat{p}c\alpha\}|.
\end{align*}
Then, as $|A_\rr|\ge n/2$, $\mathbb{E}(X({\bf B}))\ge dt/2$.
This implies that there exists $B=(B_1, \ldots, B_{r-1})$ with $B_i\subseteq A_i$ for $i\in[\rr-1]$ such that $X(B)\ge dt/2$.
By symmetry we may assume that $|\{a\in A_\rr\mid D_{p,H_a}(B)>\hat{p}c\alpha\}|\ge dt/4$.
Now fix $B_\rr\subseteq A_\rr$ of size $dt/4 \le t/\rr$ such that $D_{p,H_a}(B)>\hat{p}c\alpha$ for each $a\in B_\rr$.
Then 
\begin{align}
D_p(B_1,\dots,B_\rr)=\sum_{a\in B_\rr}D_{p,H_a}(B)\ge \hat{p}c'_\rr t^{(\rr+1)/2}\sqrt{\ln(n/t)}
\end{align}
for $n$ large enough, with $c'_\rr=cd/(4\rr^{(\rr-1)/2}+1)$.
This finishes the proof.
\end{proof}
We can now prove Theorem~\ref{thm:skew disc}.
\begin{proof}[Proof of Theorem~\ref{thm:skew disc}]
Let $D$ be the constant $d'_\rr$ from Lemma~\ref{lem:almost there}.
The cases $t > n/2$ follow from the case $t= n/2$.
So we may assume $(\ln n)/D \le t\le n/2$.
By the previous lemma, there is a constant $c>0$ and sets $B_1,\dots,B_\rr$ of size at most $t/\rr$ such that 
$|D_p(B_1,\dots,B_\rr)|\ge c\min\{p,1-p\}t^{(\rr+1)/2}\sqrt{\log(n/t)}$.
Now we claim that 
\begin{equation}
\sum_{S\subseteq [r]}(-1)^{|S|}D_p(\bigcup_{i\in S} B_i)=(-1)^rD_p(B_1,B_2,\dots, B_\rr),\label{eq:Dunion}
\end{equation}
which we will prove shortly. 
Let us first observe that it implies, for at least one of the $2^r-1$ nonempty subsets $S$ of $[r]$, we have 
\[|D_p(\bigcup_{i\in S}B_i)|\geq 2^{-r}D_p(B_1\ldots B_\rr)\geq  2^{-\rr}c\min\{p,1-p\}t^{(\kappa+1)/2}\sqrt{\log(n/t)}.\] 
As $|\bigcup_{i\in S}B_i)|\leq t$, setting $C=2^{-\rr}c$, this finishes the proof of the theorem.

To prove \eqref{eq:Dunion}, let us define for a subset $U=\{i_1,\ldots,i_m\}\subseteq [\rr]$ and $\alpha \in \mathbb{Z}_{\geq 0}^m$ such that $\sum_{i=1}^m \alpha_i=\rr$, $e(B_{i_1}^{\alpha_1},\ldots,B_{i_m}^{\alpha_m})$ to be the number of hyperedges of $H$ that have $\alpha_j$ endpoints in $B_{i_j}$ and define 
\[
D_p(B_{i_1}^{\alpha_1},\ldots, B_{i_m}^{\alpha_m})=e(B_{i_1}^{\alpha_1}\ldots B_{i_m}^{\alpha_m})-p\prod_{j=1}^m \binom{|B_{i_j}|}{\alpha_j}.
\]
Then for any $U=\{i_1,\ldots,i_m\}\subseteq [r]$ we have 
\begin{equation}
\label{eq:incexc}
D_p(\bigcup_{i\in U} B_i)=\sum_{\substack{\alpha\in \Z_{\geq 0}^m\\ \sum_{i=1}^m \alpha_i=r}}D_p(B_{i_1}^{\alpha_1},\ldots, B_{i_m}^{\alpha_m}).
\end{equation}
We substitute (\ref{eq:incexc}) into the left hand side of (\ref{eq:Dunion}) and examine the various contributions. 
Let us fix $U=\{i_1,\ldots,i_m\}\subseteq [\rr]$ and $\alpha\in \Z^m$ such that $\sum_{i=1}^m \alpha_i=\rr$ and such that each $\alpha_i>0$ and look at the contribution of $D_p(B_{i_1}^{\alpha_1},\ldots, B_{i_m}^{\alpha_m})$ to \eqref{eq:Dunion}.
Clearly, there is a contribution if and only if $S$ contains $U$. 
For $m'\geq m$ there are exactly $\binom{\rr-m}{m'-m}$ sets $S$ that give a contribution of $(-1)^{m'}D_p(B_{i_1}^{\alpha_1},\ldots, B_{i_m}^{\alpha_m})$. 
So the contribution of the pair $U,\alpha$ to \eqref{eq:Dunion} is given by
\[
\sum_{i=0}^{\rr-m}(-1)^{i+m}\binom{\rr-m}{i}=(-1)^m\sum_{i=0}^{\rr-m}\binom{\rr-m}{i}=\left \{\begin{array}{cc}0&\text{ if } m<\rr\\ (-1)^\rr&\text{ if } m=\rr. \end{array}\right. 
\]
This proves \eqref{eq:Dunion}.
\end{proof}

\section{Multi-colour quasi-Ramsey results for graphs}\label{sec:QR graphs}
Here we give a proof of Theorem~\ref{thm:graph multi-colour} and discuss some consequences of it.
Our proof of Theorem~\ref{thm:graph multi-colour} is based on the proof of~\cite[Theorem~2]{KLPR14+}, which in turn is inspired by a method of Erd\H{o}s and Pach~\cite{ErPa83}.
\begin{proof}[Proof of Theorem~\ref{thm:graph multi-colour}]
Let $\phi:\binom{[n]}{2}\to[\qq]$ be a colouring of the edges of the complete graph on $n \geq Ck\ln k$ vertices.
Let us write $G_j=([n],\phi^{-1}(\{j\}))$, the graph given by colour $j$ for $j\in [\qq]$.
For a set $S\subseteq V$ and $j\in [\qq]$, we define the following form of \emph{skew-discrepancy}
\begin{align*}
D_{j,\nu}(S):=D_{\rho_j,G_j}(S)-\nu |S|^{3/2}.
\end{align*}
By $D_j(S)$ we mean $D_{j,0}(S)$ and we refer to $\nu |S|^{3/2}$ as the \emph{skew factor} of the set $S$.

Let us construct a sequence of graphs as follows. 
We define $V_0:=[n]$.
For $i>0$, suppose $X_{i-1}$ and $V_{i-1}$ are given.
Then amongst all choices of $(S,j)$ where $S \subseteq V_{i-1}$ and $j$ is a colour, let $(X(i), c(i))$ maximize $D_{j,\nu}(S)$ 
% let $X_{i}\subseteq V_{i-1}$ be the set over all sets $S$ and $c(i)$ be the colour over all colours $j$ that maximizes $D_{j,\nu}(S)$. 
and set $V_{i}:=V_{i-1}\setminus X_{i}$. 
Note that by Theorem~\ref{thm:skew disc} we alway have that $D_{c(i),\nu}(X_{i})>0$.
We stop at step $t$, the first time that $|V_{t}|< n/2$.
Define for $j\in [\qq]$, $I_j:=\{i\in[t] \mid c(i)=j\}$.
\begin{claim}\label{claim:1}
For each $j\in [\qq]$ and each $i\in I_j$, \begin{align*}\delta(G_{j}[X_i])\ge \rho_j (|X_i|-1) +\nu(|X_i|-1)^{1/2}.\end{align*}
\end{claim}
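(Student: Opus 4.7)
The plan is a one-line extremality argument against the maximality of $(X_i,c(i))$: if some vertex $v \in X_i$ had too small $G_j$-degree in $G_j[X_i]$ (with $j := c(i)$), then removing $v$ would strictly increase the skew-discrepancy, contradicting that $(X_i,c(i))$ maximises $D_{\cdot,\nu}(\cdot)$ over candidate pairs $(S,j)$ with $S \subseteq V_{i-1}$.

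Concretely, set $s := |X_i|$ and suppose for contradiction that some $v \in X_i$ satisfies $d := \deg_{G_j[X_i]}(v) < \rho_j(s-1) + \nu(s-1)^{1/2}$. Put $S' := X_i \setminus \{v\}$; this is a legal candidate at step $i$ since $S' \subseteq V_{i-1}$. Unpacking the definition of $D_{j,\nu}$ and using that $S'$ has one fewer vertex, exactly $d$ fewer colour-$j$ edges, and $\binom{s}{2}-\binom{s-1}{2}=s-1$, a direct computation gives
\[
D_{j,\nu}(X_i) - D_{j,\nu}(S') = d - \rho_j(s-1) - \nu\bigl(s^{3/2} - (s-1)^{3/2}\bigr).
\]

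The only non-routine ingredient is the elementary inequality $s^{3/2} - (s-1)^{3/2} \ge (s-1)^{1/2}$ for $s\ge1$, which follows from
\[
s^{3/2} - (s-1)^{3/2} = \int_{s-1}^{s}\tfrac{3}{2}x^{1/2}\,dx \ge \tfrac{3}{2}(s-1)^{1/2}.
\]
Substituting the assumed degree bound $d < \rho_j(s-1) + \nu(s-1)^{1/2}$ together with $\nu \ge 0$ then forces $D_{j,\nu}(X_i) - D_{j,\nu}(S') < 0$, i.e.\ $D_{j,\nu}(S') > D_{j,\nu}(X_i)$, contradicting maximality.

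I do not foresee a real obstacle here: the key conceptual point is just that the skew factor $\nu|S|^{3/2}$ in the definition of $D_{j,\nu}$ is calibrated precisely so that, under single-vertex deletion, its first-order change dominates the additional $\nu(|X_i|-1)^{1/2}$ term required in the minimum-degree bound. In particular, the algorithm's use of the $\nu|S|^{3/2}$ penalty is exactly what converts ``maximum discrepancy'' into ``skewed minimum-degree'' for the chosen set $X_i$.
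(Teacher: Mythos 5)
Your proof is correct and follows essentially the same single-vertex-deletion argument against maximality that the paper uses; the only cosmetic difference is that you verify the elementary inequality $s^{3/2}-(s-1)^{3/2}\ge (s-1)^{1/2}$ via an integral (getting the slightly stronger constant $3/2$), whereas the paper factors $n_i^{3/2}=(n_i-1)n_i^{1/2}+n_i^{1/2}$. Both establish the same claim in the same way.
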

\begin{claimproof}
Suppose there exists a vertex $x\in X_i$ with strictly smaller minimum degree.
Write $n_i:=|X_i|$. We may of course assume that $n_i\ge 2$.
Set $X_i':=X_i\setminus \{x\}$. 
Then $e(X_i')=e(X_i)-\deg_{G_j}(x)>e(X_i)-\rho_j (n_i-1)-\nu(n_i-1)^{1/2}$.
So it follows that
\begin{align}
D_{j,\nu}(X_i')&>e(X_i)-\rho_j\binom{n_i-1}{2}-\nu (n_i-1)^{3/2}-\rho_j (n_i-1)-\nu(n_i-1)^{1/2}	\nonumber
\\
&=e(X_i)-\rho_j\binom{n_i}{2} -\nu((n_i-1)^{(3/2}+(n_i-1)^{1/2}).
\label{eq:deg bound}
\end{align}
Now note that $n_i^{3/2}=(n_i-1+1)n_i^{1/2}>(n_i-1)^{3/2}+(n_i-1)^{1/2}$.
This implies by~\eqref{eq:deg bound} that $D_{j,\nu}(X_i')>D_{j,\nu}(X_i)$, contradicting the maximality of $D_{j,\nu}(X_i)$.
\end{claimproof}

By Claim~\ref{claim:1} we may assume that $|X_i|\le k-1$ for all $i\in I_j$ and $j\in [\qq]$, for else we are done.
By symmetry among the colours, we may assume that 
\begin{align}
\sum_{i\in I_1} |X_{i}|\ge \frac{n}{2\qq}.	\label{eq:size bound}
\end{align}
Writing $I_1:=\{i_1,\dots,i_m\}$, we have that for each $s\in[m-\qq-1]$:
\begin{align}
\left(\sum_{j=1}^{\qq+1}|X_{i_{s+j}}|\right)^{3/2}\le (\qq+1)^{3/2}(k-1)^{3/2}.\label{eq:nu size bound}
\end{align}
We next show the following:
\begin{claim}\label{claim:exp decrease}
For each $s\in[m-\qq-1]$, $D_{1}(X_{i_{s+\qq+1}})\le \frac{5}{2(\qq+1)}D_1(X_{i_s}).$
\end{claim}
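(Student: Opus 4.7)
The plan is to sandwich $S_1 := \sum_{j=1}^{\qq+1} D_1(X_{i_{s+j}})$ between $D_1(X_{i_s})$ (from above, via maximality at step $i_s$) and $D_1(X_{i_{s+\qq+1}})$ (from below, via monotonicity of $D_{1,\nu}$ along the colour-$1$ subsequence). Writing $U_j := X_{i_{s+j}}$ for $j = 0, 1, \ldots, \qq+1$, the $U_j$ are pairwise disjoint subsets of $V_{i_s-1}$, so every union of some of them is a valid candidate at step $i_s$, and every union of $U_1, \ldots, U_{\qq+1}$ remains valid at each of the steps $i_{s+1}, \ldots, i_{s+\qq+1}$.

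For the upper bound, the maximality at step $i_s$ gives $D_{1,\nu}(S) \le D_{1,\nu}(U_0)$ for all $S \subseteq V_{i_s-1}$, i.e.\ $D_1(S) \le D_1(U_0) + \nu|S|^{3/2}$ after dropping a nonnegative term. Apply this with $S = Y := U_1 \cup \cdots \cup U_{\qq+1}$ and with $S = Y \setminus U_j$ for each $j$. Expanding each $D_1$ over a disjoint union into the individual $D_1(U_\ell)$ plus pairwise crossings $D_1(U_\ell, U_{\ell'})$, and summing the $Y \setminus U_j$ inequalities over $j$, one obtains, with $S_2 := \sum_{j<j'} D_1(U_j, U_{j'})$,
\begin{align*}
S_1 + S_2 &\le D_1(U_0) + \nu|Y|^{3/2},\\
\qq\, S_1 + (\qq-1)\, S_2 &\le (\qq+1)\, D_1(U_0) + \nu \sum_{j=1}^{\qq+1} |Y \setminus U_j|^{3/2}.
\end{align*}
Subtracting $(\qq-1)$ times the first inequality from the second cancels $S_2$ and, using $|Y \setminus U_j| \le |Y|$, produces $S_1 \le 2\, D_1(U_0) + 2\nu|Y|^{3/2}$.

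For the lower bound, at each colour-$1$ step $i_{s+j}$ with $1 \le j \le \qq$ the set $U_{\qq+1}$ lies in $V_{i_{s+j}-1}$ and so is a valid colour-$1$ candidate, whence $D_{1,\nu}(U_j) \ge D_{1,\nu}(U_{\qq+1})$, equivalently $D_1(U_j) \ge D_1(U_{\qq+1}) - \nu|U_{\qq+1}|^{3/2}$. Summing over $j$ gives $S_1 \ge (\qq+1) D_1(U_{\qq+1}) - (\qq+1)\nu|U_{\qq+1}|^{3/2}$. Combining with the upper bound and controlling the skew contributions via~\eqref{eq:nu size bound} and $|U_{\qq+1}| \le k-1$ gives $D_1(U_{\qq+1}) \le \tfrac{2}{\qq+1}\, D_1(U_0) + \nu\cdot O\!\big((\qq+1)^{1/2}(k-1)^{3/2}\big)$. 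The main obstacle will be to absorb this residual skew into the leading coefficient so as to widen $\tfrac{2}{\qq+1}$ to $\tfrac{5}{2(\qq+1)}$; this should follow from a lower bound on $D_1(X_{i_s})$, available since the greedy choice together with Theorem~\ref{thm:skew disc} forces $D_{1,\nu}(X_{i_s})$ to substantially exceed the skew factor in the regime $n \ge Ck\ln k$.
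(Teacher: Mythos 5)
Your proposed proof has a genuine logical gap in the step that derives the upper bound $S_1 \le 2D_1(U_0) + 2\nu|Y|^{3/2}$. You write ``Subtracting $(\qq-1)$ times the first inequality from the second cancels $S_2$,'' but both of your inequalities
\begin{align*}
S_1 + S_2 &\le D_1(U_0) + \nu|Y|^{3/2},\\
\qq\, S_1 + (\qq-1)\, S_2 &\le (\qq+1)\, D_1(U_0) + \nu\textstyle\sum_j|Y\setminus U_j|^{3/2}
\end{align*}
are upper bounds in the same direction, and subtracting one upper bound from another does not yield an upper bound. Since $S_2$ enters both constraints with a positive coefficient, no nonnegative combination of them can eliminate $S_2$, so in particular your conclusion $S_1 \le 2D_1(U_0) + 2\nu|Y|^{3/2}$ does not follow; the sign of $S_2$ is not controlled and could make the true bound on $S_1$ much worse. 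This is not a cosmetic slip: if you try to repair it with the crude lower bound $S_1 + S_2 = D_1(Y) \ge -(\qq-1)(D_1(U_0) + \nu|Y|^{3/2})$ (via $D_1 = -\sum_{c\ge2}D_c$), you obtain $S_1 \le (\qq^2-\qq+2)D_1(U_0) + \cdots$, whose leading coefficient is too large to give contraction.

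The paper cancels the crossings by a different pairing. It works with the full crossing sum $T_2 := \sum_{0\le j<l\le \qq+1} D_1(X_{i_{s+j}},X_{i_{s+l}})$ (including $j=0$), and obtains two inequalities in which $T_2$ appears with \emph{opposite} signs: the analogue of your first bound yields an upper bound on $\sum_l l D_1(X_{i_{s+l}}) + T_2$, while the identity $D_1(\cdot) = -\sum_{c=2}^{\qq} D_c(\cdot)$ applied to the big union $\bigcup_{j=0}^{\qq+1} X_{i_{s+j}}$, combined with the maximality of $D_{1,\nu}(X_{i_s})$ \emph{over all colours and all subsets of $V_{i_s-1}$}, gives inequality~\eqref{eq:4-union}, which is an upper bound on $-\sum_{j\ge1} D_1(X_{i_{s+j}}) - T_2$. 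Adding these two same-direction inequalities legitimately cancels $T_2$. Your proposal never invokes the multi-colour structure in this claim, and that omission is exactly where the argument goes wrong: the fact that there are $\qq$ colours whose discrepancies sum to zero is what supplies the needed opposite-direction constraint on the crossings. The final steps of your write-up (the monotone-subsequence lower bound $S_1 \ge (\qq+1)D_1(U_{\qq+1}) - (\qq+1)\nu|U_{\qq+1}|^{3/2}$ and absorbing the skew residue via~\eqref{eq:rho disc}) are sound and match the paper; it is only the cancellation of the crossing sum that needs to be redone.
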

\begin{claimproof}
For any $i\ne j\in I_1$ define 
\begin{align*}
D_{1}(X_i,X_j):=e_{G_1}(X_i,X_j)-\rho_1|X_i||X_j|,
\end{align*}
where $e_{G_1}(X_i,X_j)$ denotes the number of edges between $X_i$ and $X_j$ in the graph $G_1$.
Then $D_1(X_i\cup X_j)=D_1(X_i)+D_1(X_j)+D_{1}(X_i,X_j)$.
Let $s\in[m-1]$.
Then, by maximality of $D_{1,\nu}(X_{i_{s}})$, we have $D_{1,\nu}(X_{i_{s}})\ge D_{1,\nu}(X_{i_s}\cup X_{i_{s+1}})$, which implies that
\begin{align*}
D_{1}(X_{i_{s+1}})\le -D_{1}(X_{i_s},X_{i_{s+1}})+\nu |X_{i_s}\cup X_{i_{s+1}}|^{3/2}.	
\end{align*}
Using the obvious fact that $\nu|X|^{3/2}\le \nu|Y|^{3/2}$ if $|X|\le |Y|$, this implies
\begin{align}
\sum_{t=1}^{\qq+1} tD_{1}(X_{i_{s+t}})\le -\sum_{0\le j<l\le \qq+1}D_1(X_{i_{s+j}},X_{i_{s+l}})+\binom{\qq+1}{2}\nu\left|\cup_{j=0}^{\qq+1} X_{i_{s+j}}\right|^{3/2}. \label{eq:nu bound}
\end{align}

Let us now fix $s\in[m-\qq-1]$.
Then, since $D_{1}(X)=-\sum_{c=2}^\qq D_c(X)$ for any set $X$, it follows that
\begin{align*}
-D_1(\cup_{j=0}^{\qq+1} X_{i_{s+j}})=\sum_{c=2}^\qq D_c(\cup_{j=0}^{\qq+1} X_{i_{s+j}})&= \sum_{c=2}^\qq D_{c,\nu}(\cup_{j=0}^{\qq+1} X_{i_{s+j}})+(\qq-1)\nu\left|\cup_{j=0}^{\qq+1} X_{i_{s+j}}\right|^{3/2} 
\\
&\le(\qq-1)\left(D_{1}(X_{i_s})+\nu\left|\cup_{j=0}^{\qq+1} X_{i_{s+j}}\right|^{3/2}\right),
\end{align*}
by maximality of $D_{1,\nu}(X_{i_s})$.
This clearly implies 
\begin{align}
-\sum_{j=1}^{\qq+1} D_{1}(X_{i_{s+j}})-\sum_{0\le j<l\le \qq+1}D_1(X_{i_{s+j}},X_{i_{s+l}})\le \qq D_1(X_{i_s})+(\qq-1)\nu\left|\cup_{j=0}^{\qq+1} X_{i_{s+j}}\right|^{3/2}.		\label{eq:4-union}
\end{align}
Combining~\eqref{eq:4-union} and~\eqref{eq:nu bound} we obtain
\begin{align*}
\sum_{t=2}^{\qq+1} (t-1)D_{1}(X_{i_{s+t}})\le \qq D_{1}(X_{i_s})+\left(\binom{\qq+1}{2}+\qq-1\right)\nu \left|\cup_{j=0}^{\qq+1} X_{i_{s+j}}\right|^{3/2},
\end{align*}
from which it follows, using maximality of $D_{1,\nu}(X_{i_{s+t}})$, that
\begin{align*}
\frac{\qq(\qq+1)}{2}D_{1}(X_{i_{s+\qq+1}})\le \qq D_1(X_{i_s})+(\qq(\qq+1)+\qq-1))\nu\left|\cup_{j=0}^{\qq+1} X_{i_{s+j}}\right|^{3/2}.
\end{align*}
So by~\eqref{eq:nu size bound}
\begin{align}
D_{1}(X_{i_{s+\qq+1}})\le \frac{2}{\qq+1}D_1(X_{i_s})+3(\qq+1)^{3/2}\nu(k-1)^{3/2}.
\label{eq:compare}
\end{align}
As $|V_{i}|\ge n/2$ for all $i<t$ we know by Theorem~\ref{thm:skew disc} that there exists a set $X\subseteq V_{i_s}$ of size at most $k$ whose $\rho_j$-discrepancy satisfies 
\begin{align}
D_{\rho_j,G_j}(X) \ge
\max_{j'\in [\qq]}\min\{\rho_{j'},1-\rho_{j'}\}C k^{3/2}\sqrt{\ln(C(\nu)\ln k)},	\label{eq:rho disc}
\end{align}
for some $j\in [\qq]$.
Since the skew factor of this set $X$ is at most $\nu k^{3/2}$, it follows that if $k$ (or $C(\nu)$) is large enough, then $D_1(X_{i_s})\ge   6(\qq+1)^{5/2}\nu(k-1)^{3/2}$.
Combining this with~\eqref{eq:compare} finishes the proof of the claim.
\end{claimproof}
Claim~\ref{claim:exp decrease} now implies that for $s=m-\qq-1$ we have $D_1(X_{i_s})\le \left(\frac{5}{2(\qq+1)}\right)^{s/(\qq+1)}D_1(X_{i_1})$ (where for simplicity we have assumed that $s\equiv0 \pmod{(\qq+1)}$).
Note that $D_1(X_{i_s})\geq 6(\qq+1)^{5/2}\nu(k-1)^{3/2}$ (by the proof of Claim~\ref{claim:exp decrease}) and that this is at least $1$ when $k$ is large enough.
From this we deduce that $m$ is bounded by
\begin{align*}
\frac{(\qq+1)\ln(D(X_{i_1}))}{\ln(\frac25(\qq+1))}+(\qq+1)\le \frac{2(\qq+1)(1+\ln(\frac25(\qq+1))}{\ln(\frac25(\qq+1))}\ln(k-1)=:c(\qq)\ln(k-1).
\end{align*}
So by~\eqref{eq:size bound} we deduce that at least one of the $m$ sets $X_i$ with $i\in I_1$ satisfies $|X_i|\ge \frac{C(\nu)k}{2\qq c(\qq)}$,
which for $C(\nu)$ large enough contradicts the fact that $|X_i|\le k-1$ for all $i\in I_1$. This proves the theorem.
\end{proof}

In case $\nu=0$ in Theorem~\ref{thm:graph multi-colour} the proof shows that the statement can actually be strengthened to the case that $(\rho_1,\dots,\rho_\qq)$ are not constant. Indeed, from~\eqref{eq:compare} we can directly argue that $m$ is bounded by a constant depending on $\qq$ times $k-1$. This means we do not need~\eqref{eq:rho disc}, which requires that $\rho_i$ is not too small in terms of $k$ or $C(\nu)$.
So we have the following corollary, which in particular implies that for any graph $G$ on $n$ vertices, $g(G)=\Omega(n/\ln n)$, partly answering the question of Falgas-Ravry, Markstr\"om, and Verstra\"ete~\cite{FMV15+}. 
\begin{cor}\label{cor:nu=0}
For any $\qq\ge2$ there exists a constant $C$ such that for any $k\in\N$, any $\qq$-colouring of the edges of the complete graph on $n=Ck\ln k$ vertices and any $(\rho_1,\dots,\rho_\qq)\in (0,1)^{\qq}$ such that $\sum_{i=1}^\qq\rho_i=1$, there exists a colour $j\in [\qq]$ and set of vertices $S$ of size $\ell$ at least $k$ such that the graph induced by $S$ in colour $j$ has minimum degree at least $\rho_j(\ell-1)$.
\end{cor}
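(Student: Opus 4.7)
The plan is to follow the proof of Theorem~\ref{thm:graph multi-colour} almost verbatim, but set $\nu=0$ throughout. The payoff is that every additive error arising from the skew factor $\nu|S|^{3/2}$ vanishes from the analysis of $D_1(X_{i_s})$, and it is exactly those errors that in the original proof forced us to invoke the lower bound~\eqref{eq:rho disc} coming from Theorem~\ref{thm:skew disc}, which in turn required $\rho_j$ not to be too small in terms of $k$.

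First I would reproduce the greedy construction from the proof of Theorem~\ref{thm:graph multi-colour}: let $V_0=[n]$ and, for each $i\geq 1$, let $(X_i,c(i))$ maximise $D_{j}(S)=e_{G_j}(S)-\rho_j\binom{|S|}{2}$ over pairs $(S,j)$ with $S\subseteq V_{i-1}$ and $j\in[\qq]$, then set $V_i=V_{i-1}\setminus X_i$, stopping at the first step $t$ with $|V_t|<n/2$. Since $\sum_j D_j(S)=0$ and Theorem~\ref{thm:skew disc} produces a set with nonzero $\rho_j$-discrepancy inside $V_{i-1}$, we have $D_{c(i)}(X_i)>0$ at every step. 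The verbatim proof of Claim~\ref{claim:1}, specialised to $\nu=0$, then gives $\delta(G_{c(i)}[X_i])\geq\rho_{c(i)}(|X_i|-1)$, so if any $|X_i|\geq k$ we are done. Assume otherwise. Pigeonhole over colours provides a colour, WLOG colour $1$, with $\sum_{i\in I_1}|X_i|\geq n/(2\qq)$, where $I_1=\{i_1<\cdots<i_m\}$.

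Next I would re-run the derivation of~\eqref{eq:compare} in Claim~\ref{claim:exp decrease}, observing that every additive term labelled by $\nu$ in~\eqref{eq:nu bound} and in~\eqref{eq:4-union} is literally a multiple of $\nu$, so for $\nu=0$ the recursion collapses to the clean geometric inequality $D_1(X_{i_{s+\qq+1}})\leq\tfrac{2}{\qq+1}D_1(X_{i_s})$ with no additive remainder. Combined with the trivial upper bound $D_1(X_{i_1})\leq\binom{k-1}{2}$ and the positivity $D_1(X_{i_s})>0$ throughout, iterating yields $m\leq c(\qq)\ln(k-1)$ for some constant $c(\qq)$ depending only on $\qq$. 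Crucially, no lower bound on $D_1(X_{i_s})$ in terms of $\min_j\min\{\rho_j,1-\rho_j\}$ is required, which is exactly what allows $C$ to be chosen uniformly in the tuple $(\rho_1,\ldots,\rho_\qq)$. The pigeonhole estimate then yields $n/(2\qq)\leq m(k-1)\leq c(\qq)(k-1)\ln(k-1)$, contradicting $n\geq Ck\ln k$ whenever $C>2\qq c(\qq)$, which finishes the proof.

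The main obstacle is the careful bookkeeping in the proof of Claim~\ref{claim:exp decrease}: one has to verify that every additive term in the chain~\eqref{eq:nu bound}--\eqref{eq:compare} really does carry an explicit factor of $\nu$, so that setting $\nu=0$ eliminates all absorbed error and permits a purely geometric recurrence that can be iterated down from $D_1(X_{i_1})\leq\binom{k-1}{2}$ to $D_1(X_{i_m})>0$ without any $\rho$-dependent threshold. Once this is checked, the remainder of the argument is just a repetition of the $\nu>0$ case with simpler constants.
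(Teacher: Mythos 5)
The proposal follows the paper's greedy construction correctly, and the observation that every additive error term in the derivation of~\eqref{eq:compare} carries an explicit factor of $\nu$ is right: with $\nu=0$ one does get the clean recursion $D_1(X_{i_{s+\qq+1}})\leq\tfrac{2}{\qq+1}D_1(X_{i_s})$. However, the step from there to $m\leq c(\qq)\ln(k-1)$ has a genuine gap. You claim that the upper bound $D_1(X_{i_1})\leq\binom{k-1}{2}$, the positivity $D_1(X_{i_s})>0$, and the geometric decay together force $m=O_\qq(\ln k)$, and you explicitly describe iterating ``down to $D_1(X_{i_m})>0$ without any $\rho$-dependent threshold.'' But a geometrically decreasing sequence of real numbers starting at $\binom{k-1}{2}$ can stay strictly positive for arbitrarily many steps, so positivity alone yields no bound on $m$ whatsoever. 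What the argument actually requires is a \emph{quantitative} lower bound $D_1(X_{i_m})\geq\delta$ with $\delta^{-1}$ at most polynomial in $k$; only then does $m=O_\qq\bigl(\ln(\binom{k-1}{2}/\delta)\bigr)=O_\qq(\ln k)$ follow. In the $\nu>0$ proof of Theorem~\ref{thm:graph multi-colour} this lower bound comes precisely from~\eqref{eq:rho disc}, which gives $D_1(X_{i_s})\geq 6(\qq+1)^{5/2}\nu(k-1)^{3/2}\geq 1$ for $k$ large; setting $\nu=0$ kills that bound entirely, and you have not supplied a replacement. Note that $D_1(S)=e_1(S)-\rho_1\binom{|S|}{2}$ is a difference of an integer and a real multiple of $\rho_1$, so for irrational $\rho_1$ its positive values can be arbitrarily small, and no ``integrality'' shortcut is available.

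To be fair, the paper's own remark preceding the corollary is terse and (as written, claiming ``$m$ is bounded by a constant depending on $\qq$ times $k-1$'') would only yield a quadratic bound on $n$, not $Ck\ln k$; the corollary and its application to $g(G)=\Omega(n/\ln n)$ clearly need $m=O_\qq(\ln k)$, so the remark must be read with $\ln(k-1)$ in place of $k-1$ and the missing lower bound on $D_1(X_{i_m})$ supplied. Your proposal reproduces the paper's omission rather than filling it: the claim that no $\rho$-dependent (or indeed any) lower bound on $D_1(X_{i_m})$ is needed is exactly the point that must be justified, and at present it is not.
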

%\begin{rem}
\paragraph{Remark.}
By adapting some results in~\cite{KLPR14+}, which are based on discrepancy results of Spencer~\cite{Spe85} and Lov\'asz, Spencer and Vesztergombi~\cite{LSV86}, one can deduce from Theorem~\ref{thm:graph multi-colour} that there is a set $S$ of size {\em exactly} $k$ which has minimum degree at least $\rho_i(k-1)$ plus a constant times $\sqrt{(k-1)/\ln k}$.
We leave the details to the reader.
%\end{rem}

\section{A precise threshold for uniform hypergraphs}\label{sec:QR hyper}
In this section we prove Proposition~\ref{prop:linear hyper} and Theorem~\ref{thm:sharp hyper}.

\subsection{The linear regime}
We prove Proposition~\ref{prop:linear hyper} by combining a greedy deletion argument together with probabilistic thinning, similar to what was done for graphs in~\cite{KPPR15}.
We require the following concentration inequality~\cite[Corollary~6.10]{McD89}.

\begin{theorem}[McDiarmid~\cite{McD89}]\label{thm:colin}
Let $Z_1, \ldots, Z_n$ be random variables with $Z_i$ taking values in a set $A_i$ and let $Z = (Z_1, \ldots, Z_n)$. Let $f: \prod A_i \rightarrow \mathbb{R}$ be measurable. Suppose there exist constants $c_1, \ldots, c_n$ such that for each $k= 1, \ldots, n$ 
\begin{align*}
\left|
 	\mathbb{E}( f(Z) \mid Z_i=z_i, i\in[k-1], Z_k=z_k) 
- 	\mathbb{E}(f(Z) \mid Z_i=z_i, i\in[k-1], Z_k=z_k')
 \right| \leq c_k
\end{align*}
for all $(z_1, \ldots, z_{k-1}) \in \prod_{i=1}^{k-1}A_i$ and $z_k, z_k' \in A_k$. Then for all $t>0$ we have
\[
\mathbb{P}( |f(Z) - \mathbb{E}(f(Z))| > t ) \leq \exp\left(-2t^2 \left/ \sum_{i=1}^n c_i^2\right.\right).
\]
\end{theorem}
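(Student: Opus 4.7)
The plan is to derive this as a consequence of the Azuma--Hoeffding martingale concentration inequality applied to the Doob martingale of $f(Z)$, which is the standard route to McDiarmid's bounded-differences inequality.

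First, I would define the Doob martingale $M_k := \mathbb{E}(f(Z) \mid Z_1, \ldots, Z_k)$ for $k = 0, 1, \ldots, n$, so that $M_0 = \mathbb{E}(f(Z))$ is constant and $M_n = f(Z)$. The sequence $(M_k)$ is automatically a martingale with respect to the filtration generated by $Z_1, Z_2, \ldots$ by the tower property of conditional expectation.

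The core step --- and the only place the hypothesis on $f$ is used --- is to show that the martingale differences $\Delta_k := M_k - M_{k-1}$ satisfy $|\Delta_k| \leq c_k$ almost surely. Fix values $(z_1, \ldots, z_{k-1}) \in \prod_{i=1}^{k-1} A_i$ and consider the function
\begin{align*}
g_k(z_k) := \mathbb{E}\bigl(f(Z) \mid Z_i = z_i \text{ for } i \in [k-1],\; Z_k = z_k\bigr).
\end{align*}
The hypothesis of the theorem says exactly that the image of $g_k$ has diameter at most $c_k$. On the event $\{Z_i = z_i \text{ for } i \in [k-1]\}$ we have $M_k = g_k(Z_k)$, while $M_{k-1}$ is the expectation of $g_k(Z_k)$ under the conditional distribution of $Z_k$ given that event; in particular $M_{k-1}$ lies in the convex hull of the image of $g_k$, so $|M_k - M_{k-1}| \leq c_k$ on this event, and therefore almost surely.

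Finally, applying the Azuma--Hoeffding inequality for a bounded-difference martingale yields
\begin{align*}
\mathbb{P}\bigl(|M_n - M_0| > t\bigr) \leq \exp\!\left(-\frac{2 t^2}{\sum_{i=1}^n c_i^2}\right)
\end{align*}
(up to a possible factor of $2$ on the right, depending on whether one argues one- or two-sided), and substituting $M_n - M_0 = f(Z) - \mathbb{E}(f(Z))$ gives the stated bound. The main content is the range-of-conditional-expectation observation in step two; once that is in place, everything else is routine. The only technical wrinkle is measurability of $g_k$ in $z_k$, which is immediate in the present product-space setting via the regular conditional distribution of $Z_k$ given $Z_1, \ldots, Z_{k-1}$.
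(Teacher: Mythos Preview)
The paper does not prove this statement at all: it is quoted verbatim from McDiarmid's survey as \cite[Corollary~6.10]{McD89} and used as a black box. So there is no ``paper's own proof'' to compare against.

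Your sketch is the standard Doob-martingale/Azuma--Hoeffding derivation and is essentially correct. One small point worth tightening: from your argument you actually get that $\Delta_k$ lies in an interval of length at most $c_k$ (since $M_{k-1}$ sits in the convex hull of the range of $g_k$, whose diameter is $\le c_k$), not merely $|\Delta_k|\le c_k$. This distinction matters for the constant in the exponent: Hoeffding's lemma applied to a difference supported on an interval of length $c_k$ gives $\mathbb{E}[e^{\lambda\Delta_k}\mid \mathcal{F}_{k-1}]\le e^{\lambda^2 c_k^2/8}$, which after optimisation yields the one-sided bound $\exp(-2t^2/\sum c_i^2)$ as stated. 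Using only $|\Delta_k|\le c_k$ in the crude Azuma form would lose a factor of $4$ in the exponent.
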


Using this result, we can prove the following lemma, which is a standard application of martingale inequalities, but we spell out the details for completeness.

\begin{lemma}
Let $H = (V,E)$ be an $r$-uniform hypergraph with $N$ vertices and $p \binom{N}{r}$ edges. If $S \subseteq V$ is a uniformly random subset of $n$ distinct vertices, then for any $p>\eps>0$,
\[
\mathbb{P} \left( e(H[S]) \leq (p- \varepsilon)\binom{n}{r} \right) < \exp\left( \frac{-2\varepsilon^2(n- 2(r-1))}{r^2} \right).
\]
\end{lemma}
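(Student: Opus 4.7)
The plan is to apply Theorem~\ref{thm:colin} to $f:=e(H[S])$, viewed as a function of the ordered sample $(Z_1,\ldots,Z_n)$ obtained by drawing the vertices of $S$ one at a time, so that $(Z_1,\ldots,Z_n)$ is a uniformly random ordered $n$-tuple of distinct vertices of $V$ and $S=\{Z_1,\ldots,Z_n\}$. The expectation is easy: by symmetry each hyperedge $e\in E(H)$ lies in $S$ with probability $\binom{N-r}{n-r}/\binom{N}{n}=\binom{n}{r}/\binom{N}{r}$, so $\mathbb{E}(f)=p\binom{n}{r}$, and it suffices to prove a lower-tail bound at $\mathbb{E}(f)-\eps\binom{n}{r}$.

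The crux is bounding the constants $c_k$ in Theorem~\ref{thm:colin}. Fixing $y_1,\ldots,y_{k-1}$ and distinct $y_k,y_k'\in V\setminus\{y_1,\ldots,y_{k-1}\}$, I will construct a coupling $(S,S')$ of the two relevant conditional laws of $S$. Draw a uniformly random $(n-k)$-subset $R$ of $V\setminus\{y_1,\ldots,y_{k-1},y_k\}$ and set $S:=\{y_1,\ldots,y_{k-1},y_k\}\cup R$; if $y_k'\in R$, let $R':=(R\setminus\{y_k'\})\cup\{y_k\}$, otherwise let $R':=R$, and set $S':=\{y_1,\ldots,y_{k-1},y_k'\}\cup R'$. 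A short preimage count confirms that $R'$ is uniformly distributed over $(n-k)$-subsets of $V\setminus\{y_1,\ldots,y_{k-1},y_k'\}$, so $S'$ has the correct conditional marginal.

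By construction, either $S=S'$ (when $y_k'\in R$) or $S'$ is obtained from $S$ by swapping $y_k$ for $y_k'$. In both cases $|S\cap S'|\ge n-1$, and every hyperedge in the symmetric difference $H[S]\triangle H[S']$ must contain one of $y_k,y_k'$ together with $r-1$ vertices drawn from $S\cap S'$. Each side of the symmetric difference thus contains at most $\binom{n-1}{r-1}$ hyperedges, so $|e(H[S])-e(H[S'])|\le \binom{n-1}{r-1}$ pointwise under the coupling, and taking expectations yields $c_k\le\binom{n-1}{r-1}$ for every $k$.

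Finally, combining $\sum_{k=1}^n c_k^2\le n\binom{n-1}{r-1}^2$ with $t=\eps\binom{n}{r}$ and the identity $\binom{n}{r}/\binom{n-1}{r-1}=n/r$ in Theorem~\ref{thm:colin} gives
\[
\mathbb{P}\!\left(f\le (p-\eps)\binom{n}{r}\right)\le \exp\!\left(-\tfrac{2\eps^2 n}{r^2}\right),
\]
which is at least as strong as the stated bound (since $n-2(r-1)\le n$). The only delicate step is the coupling construction and the verification that $R'$ has the right distribution; everything else is routine bookkeeping.
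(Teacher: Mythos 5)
Your proof is correct and takes essentially the same approach as the paper: you apply Theorem~\ref{thm:colin} to $e(H[S])$ as a function of the ordered sample, and your coupling $(S,S')$ is precisely the bijection $g$ the paper constructs between $V^{(n)}|\mathbf{z}$ and $V^{(n)}|\mathbf{z'}$. Your bound $c_k\le\binom{n-1}{r-1}$ is marginally tighter than the paper's $\binom{n}{r-1}$, yielding the slightly stronger exponent $-2\eps^2n/r^2$, but this is a cosmetic difference.
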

\begin{proof}
We formulate the setup to apply Theorem~\ref{thm:colin}. 
Pick the random subset $S$ by picking its vertices one at a time uniformly at random from the pool of remaining vertices, and let $Z_1, \ldots, Z_n$ be the vertices picked, and let $Z = (Z_1, \ldots, Z_n)$.

For $\mathbf{v} = (v_1, \ldots, v_n) \in V^n$, write $H[\mathbf{v}]:=H[\{v_1, \ldots v_n\}]$.
Let $f: V^n \rightarrow \mathbb{N}$ be defined by setting $f(v_1, \ldots, v_n)$ to be the number of edges in $H[\mathbf{v}]$. 
Note that 
\[
\mathbb{P} \left( e(H[S]) \leq (p- \varepsilon)\binom{n}{r} \right)
= \mathbb{P} \left( f(Z) \leq (p- \varepsilon)\binom{n}{r} \right).
\]
Write $V^{(k)}$ for the set of $k$-component vectors in which all components are distinct. Furthermore, given $\mathbf{z} = (z_1, \ldots, z_k) \in V^k$, %write $V^n|\mathbf{z}$ (resp. 
write $V^{(n)}|\mathbf{z}$ for the set of vectors in $V^{(n)}$ whose first $k$ components are $(z_1, \ldots, z_k)$.

Given two vectors $\mathbf{z} = (z_1, \ldots, z_{i-1}, z_i)$ and $\mathbf{z'} = (z_1, \ldots, z_{i-1}, z'_i)$ $\in V^{(i)}$, we define a function $g:V^{(n)}|\mathbf{z} \rightarrow V^{(n)}|\mathbf{z'}$ such that $g$ fixes $\mathbf{v}$ if $z_i'$ occurs as a component of $\mathbf{v}$ and replaces $z_i$ with $z'_i$ in $\mathbf{v}$ if $z_i'$ does not occur as a component in $\mathbf{v}$. It is easy to see that $g$ is a bijection.

Now we check the bounded difference condition in Theorem~\ref{thm:colin}. 
Note first that for  $\mathbf{z} = (z_1, \ldots, z_{i-1},z_i)\in V^{(i)}$,
\[
\mathbb{E}\left( f(Z) \mid (Z_1,\ldots,Z_i)=\mathbf{z}\right)
=
\sum_{\mathbf{v} \in V^{(n)}|\mathbf{z}} \binom{N-i}{n-i}^{-1}e(H[\mathbf{v}]).
\]
Taking $\mathbf{z'} = (z_1, \ldots, z_{i-1}, z_i') \in V^{(i)}$, we have
\begin{align*}
&\left| \mathbb{E}\left( f(Z) \mid (Z_1,\ldots,Z_i)=\mathbf{z}\right) 
- 	\mathbb{E}\left(f(Z) \mid (Z_1,\ldots,Z_i)=\mathbf{z'}\right) \right|\\
& = \bigg|\sum_{\mathbf{v} \in V^{(n)}|\mathbf{z}} \binom{N-i}{n-i}^{-1}e(H[\mathbf{v}]) 
-  \sum_{\mathbf{v} \in V^{(n)}|\mathbf{z'}} \binom{N-i}{n-i}^{-1}e(H[\mathbf{v}]) \bigg| 
\\
& = \bigg|\sum_{\mathbf{v} \in V^{(n)}|\mathbf{z}} \binom{N-i}{n-i}^{-1}(e(H[\mathbf{v}]) -  e(H[g(\mathbf{v})]) \bigg|
\\ 
& \leq \max_{\mathbf{v} \in V^{(n)}|\mathbf{z}} |e(H[\mathbf{v}]) -  e(H[g(\mathbf{v})])| \leq \binom{n}{r-1}.
\end{align*}
The last quantity is bounded above by $\binom{n}{r-1}$ because $H[\mathbf{v}]$ and $H[g(\mathbf{v})]$ are two hypergraphs that differ in at most one vertex. Now observing that $\mathbb{E}(f(Z))=p\binom{n}{r}$ and applying Theorem~\ref{thm:colin} with $c_k = \binom{n}{r-1}$ for all $k$ yields the result.
\end{proof}

We can now give a proof of Proposition~\ref{prop:linear hyper}.

\begin{proof}[Proof of Proposition~\ref{prop:linear hyper}]
Assume $\sum_{i=1}^\qq\rho_i<1 - \varepsilon$ for some $\varepsilon >0$ and let $N=Ck$, where $C$ is to be determined later. Given any $\qq$-colouring of the edges of the complete $r$-uniform hypergraph on $N$ vertices, let $H_i$ be the subhypergraph consisting of edges coloured $i$ and let $p_i$ be the edge density of $H_i$. Then for some $i$, we must have that $p_i > \rho_i +\eps/q$.  Set $\varepsilon'=\eps/q$. 
We may assume without loss of generality that $p_1 >\rho_1 + \varepsilon'$.  

Now, starting with $H_1$ and $n=N$, we repeatedly remove an arbitrary vertex of degree less than $[\rho_1 + (\varepsilon' /2)]\binom{n-1}{r-1}$. If we continue for $t$ iterations, then we have removed at most
\[
\sum_{i=1}^{t}   [\rho_1 + (\varepsilon' /2)]\binom{N-i}{r-1}
 = [\rho_1 + (\varepsilon' /2)] \left( \binom{N}{r} - \binom{N-t}{r} \right)
\] 
vertices from $H_1$.
So after $t$ iterations the number of vertices is $n=N-t$ and the number of edges remaining in the hypergraph is at least
\[
(\varepsilon' /2) \binom{N}{r}.
\]
Since this hypergraph can only have at most $\binom{n}{r}$ edges, it follows that 
\[
(\varepsilon' /2) \binom{N}{r} \leq \binom{N-t}{r}.
\]
It is easy to see that there exists $c = c(\varepsilon', r)$ such that for $n = N-t \leq  cN$, this inequality fails. Hence we find a set $T$ of vertices such that with $|T| = n = cN$ vertices such that every vertex in $H_1[T]$ has degree at least $[\rho_1 + (\varepsilon' /2)]\binom{n-1}{r-1}$ in $H_1[T]$. We want that $|T|\geq k$, so it suffices to take $C\geq 1/c$.

Finally we pick $S \subseteq T$ uniformly at random such that $|S|=k$. 
Let $v \in T$. Then, conditional on $v\in S$, the set $S\setminus\{v\}$ is a uniformly random set $S' \subseteq T\setminus\{v\}$ such that $|S'|=k-1$.
Now, if $H'$ denotes the $(r-1)$-uniform hypergraph $H'$ on $T\setminus\{v\}$ induced by the at least $[\rho_1 + (\varepsilon' /2)]\binom{n-1}{r-1}$ edges incident with $v$, then the degree of $v$ is the same as $e(H'[S'])$. So it follows from the previous lemma that, conditional on $v\in S$, the probability that the degree of  $v$ is at most $ [\rho_1 + (\varepsilon' /4)]\binom{k-1}{r-1}$ is exponentially small in $k$. Since the probability that $v\in S$ is  $k/n$, we have that, unconditionally, the probability that there exists $v\in T$ with degree at most $[\rho_1 + (\varepsilon' /4)]\binom{k-1}{r-1}$ is at most $n\cdot\frac{k}{n}\exp(-\Omega(k)) \to 0$ as $k\to\infty$. We conclude that for large enough $k$ there exists $S$ of size $k$ such that $H_1[S]$ has minimum degree at least $[\rho_1 + (\varepsilon' /4)]\binom{k-1}{r-1}$, as required.
\end{proof}

\subsection{From polynomial to super-polynomial growth}
Although we treat the significantly more general situation of hypergraphs and multiple biased colours,
our proof of Theorem~\ref{thm:sharp hyper} has strong similarities to that of~\cite[Theorem~3]{KPPR15}.
\begin{proof}[Proof of Theorem~\ref{thm:sharp hyper}\ref{itm:sharp hyper,upper}]
Let $c$ be the constant from Theorem~\ref{thm:skew disc} and let $C:=\max_{j\in [\qq]}(c\rho_j)^{-1}$.
Define for $\nu\ge 0$ and $j\in [\qq]$ the following form of skew discrepancy for a set $S\subseteq V$:
\begin{align*}
D_{\nu,j}(S):=D_{\rho_j,H_j}(S)-\nu |S|^{(\rr+1)/2}\sqrt{\ln |S|}.
\end{align*}
Let $X\subseteq V$ attain the maximum skew discrepancy over all subsets of $V$ and $j\in [n]$.
By symmetry we may assume that it is attained at colour $1$.
Using that $\binom{\ell-1}{\rr}+\binom{\ell-1}{\rr-1}=\binom{\ell}{\rr}$, we find by a similar argument as in the proof of Claim~\ref{claim:1}, that 
\begin{align*}
\delta(H_1[X])\ge \rho_1\binom{|X|-1}{\rr-1}+|X|^{(\rr-1)/2}\nu \sqrt{\ln|X|}.
\end{align*}
So it now suffices to show that $|X|\ge k$.
By Theorem~\ref{thm:skew disc} there exists a set $Y\subseteq V$ of size at most $k^{2\rr/(\rr+1)}$ such that $D_{\rho_1,H_1}(Y)\ge c\min\{\rho_1,1-\rho_1\}k^{\rr}\nu C\sqrt{(1+\varepsilon)\ln k}\ge k^{\rr}\nu \sqrt{(1+\varepsilon) \ln k}$. 
As $\varepsilon>0$, the skew factor of $X$ is dominated by $\nu k^{\rr}\sqrt{(1+\varepsilon)\ln k}$ and hence for $k$ large enough we know that $D_{\nu,j}(X)\ge k^{\rr}$. 
This clearly implies that $|X|\ge k$ and finishes the proof.
\end{proof}

For the proof of Theorem~\ref{thm:sharp hyper}\ref{itm:sharp hyper,lower}, first we describe the expected behaviour of what we refer to here as {\em $t$-dense sets} --- vertex subsets that induce average degree $\overline{\deg}$ at least $t$ --- in the random $\rr$-uniform hypergraph $\Hyp{\rr}{n}{\rho}$ with vertex set $[n] =\{1,\dots,n\}$ and hyperedge probability $\rho$.
For this, we need a result best stated with large deviations notation, cf.~\cite{DeZe98}.  For $\rho\in(0,1)$, let
\begin{align*}
\Lambda^*_\rho(x) = 
\begin{cases}
x \ln \frac{x}{\rho} + (1 - x) \ln \frac{1 - x}{1-\rho} & \mbox{for $x\in [0, 1]$}\\
\infty & \mbox{otherwise}
\end{cases}
\end{align*}
(where $\Lambda^*_\rho(0) =-\ln(1-\rho) $ and $\Lambda^*_\rho(1) = -\ln\rho$).
This is the Fenchel-Legendre transform of the logarithmic moment generating function associated with the Bernoulli distribution with probability $\rho$ (cf.~Exercise 2.2.23(b) of~\cite{DeZe98}).  Some calculus checks that $\Lambda^*_\rho(x)$ has a global minimum of $0$ at $x = \rho$, is strictly decreasing on $[0, \rho)$ and strictly increasing on $(\rho,1]$.  The following is a straightforward adaptation of Lemma~2.2(i) in~\cite{KaMc10} and bounds the probability that a given subset of $k$ vertices in $\Hyp{\rr}{n}{\rho}$ is $t$-dense.

\begin{lemma}
\label{lem:A_n}
Given $t,k$ with $t \ge \rho\binom{k - 1}{\rr-1}$,
\begin{align*}
\Pr\left(\overline{\deg}\left(\Hyp{\rr}{k}{\rho}\right) \ge t\right) \le \exp\left(-\binom{k}{\rr} \Lambda^*_\rho\left( t\left/\binom{k - 1}{\rr-1}\right. \right) \right).
\end{align*}
\end{lemma}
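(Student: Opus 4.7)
The plan is to reduce the statement to a standard Chernoff/Cram\'er upper tail bound for the binomial distribution with rate function $\Lambda^*_\rho$. First I would translate the average degree condition into a condition on the number of hyperedges: if $H$ is any $\rr$-uniform hypergraph on $k$ vertices with $e$ hyperedges, then the sum of degrees is $\rr e$, so $\overline{\deg}(H)\ge t$ if and only if $e\ge tk/\rr$. Using the identity $\binom{k}{\rr}=\frac{k}{\rr}\binom{k-1}{\rr-1}$, I would rewrite this threshold as $e\ge x\binom{k}{\rr}$, where $x:=t/\binom{k-1}{\rr-1}$.

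Next, I would observe that in $\Hyp{\rr}{k}{\rho}$ the number of hyperedges is distributed as $\mathrm{Bin}\bigl(\binom{k}{\rr},\rho\bigr)$, since each of the $\binom{k}{\rr}$ potential hyperedges is present independently with probability $\rho$. Thus
\begin{align*}
\Pr\!\left(\overline{\deg}(\Hyp{\rr}{k}{\rho})\ge t\right)=\Pr\!\left(\mathrm{Bin}\!\left(\tbinom{k}{\rr},\rho\right)\ge x\tbinom{k}{\rr}\right).
\end{align*}
The hypothesis $t\ge\rho\binom{k-1}{\rr-1}$ translates exactly to $x\ge\rho$, so we are asking for an upper tail beyond the mean of the binomial.

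At this point the conclusion follows from the standard Cram\'er upper bound: for $X\sim\mathrm{Bin}(N,\rho)$ and any $x\in[\rho,1]$,
\begin{align*}
\Pr(X\ge xN)\le\exp\!\bigl(-N\Lambda^*_\rho(x)\bigr),
\end{align*}
which is the content of the exercise in Dembo--Zeitouni already cited in the paper, proved by a routine Markov inequality applied to $\mathrm{e}^{\lambda X}$ and optimised over $\lambda\ge 0$ (the optimiser being $\lambda=\ln\tfrac{x(1-\rho)}{\rho(1-x)}$, which yields precisely the Fenchel--Legendre transform $\Lambda^*_\rho(x)$). Substituting $N=\binom{k}{\rr}$ and $x=t/\binom{k-1}{\rr-1}$ gives the stated bound.

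There is essentially no obstacle here beyond bookkeeping: the only thing to watch is the conversion between average degree and edge count via the factor $k/\rr$, which is exactly what makes the leading $\binom{k}{\rr}$ appear in the exponent and which pairs correctly with the argument $t/\binom{k-1}{\rr-1}$ of $\Lambda^*_\rho$. The hypothesis $t\ge\rho\binom{k-1}{\rr-1}$ is used only to ensure we are in the regime $x\ge\rho$ where the Cram\'er bound is nontrivial (and where $\Lambda^*_\rho$ is nonnegative and increasing).
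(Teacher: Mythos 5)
Your proof is correct, and it is the natural argument that the paper implicitly relies on (the paper gives no proof of this lemma, stating only that it is ``a straightforward adaptation of Lemma~2.2(i)'' of Kang--McDiarmid). The translation from average degree at least $t$ to the edge count being at least $(tk/\rr)=x\binom{k}{\rr}$ with $x=t/\binom{k-1}{\rr-1}$ is exactly the right bookkeeping, the edge count in $\Hyp{\rr}{k}{\rho}$ is indeed $\mathrm{Bin}\!\left(\binom{k}{\rr},\rho\right)$, and the hypothesis $t\ge\rho\binom{k-1}{\rr-1}$ places you in the upper-tail regime $x\ge\rho$ where the Chernoff--Cram\'er bound $\Pr(X\ge xN)\le\exp(-N\Lambda^*_\rho(x))$ applies. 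No gaps.
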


\begin{proof}[Proof of Theorem~\ref{thm:sharp hyper}\ref{itm:sharp hyper,lower}]
For any $\eta>1$ let
\begin{align*}
n = \left\lfloor \frac{k^{\nu(k)^2+1}}{\eta e} \right\rfloor,
\end{align*}
where $k$ is some large enough integer. For each $i\in[\qq]$ we write
\begin{align*}
f_i(\ell) = \rho_i\binom{\ell-1}{\rr-1}+\nu(\ell) \sqrt{\rr\binom{\ell-1}{\rr-1}\ln \ell}.
\end{align*}
Let us consider a random $\qq$-colouring of $\binom{[n]}{\rr}$, the hyperedges of $K^{(\rr)}_n$, where independently and uniformly each hyperedge is assigned the colour $i$ with probability $\rho_i$. So, writing $H_i$ for the subhypergraph induced on $[n]$ by the hyperedges of colour $i$, we see that $H_i$ is distributed as the random $\rr$-uniform hypergraph $\Hyp{\rr}{n}{\rho_i}$.

Given a subset $S \subseteq [n]$ of $\ell\ge k$ vertices, let $A_S$ be the event that $\overline{\deg}(H_i[S]) \ge f_i(\ell)$ for some $i\in [\rr]$.
Since $f_i(\ell)\ge \rho_i\binom{\ell-1}{\rr-1}$ for each $i$, we have by Lemma~\ref{lem:A_n} that
\begin{align*}
\Pr(A_S)
& \le \sum_{i=1}^\qq\exp\left(-\binom{\ell}{\rr}\Lambda^*_{\rho_i}\left(f_i(\ell)\left/\binom{\ell-1}{\rr-1}\right.\right)\right) \\
& \le \sum_{i=1}^\qq\exp\left(-\binom{\ell}{\rr}\Lambda^*_{\rho_i}\left(\rho_i+\nu(\ell)\sqrt{\rr\ln \ell\left/\binom{\ell-1}{\rr-1}\right.}\right)\right).
\end{align*}
Now, writing
\begin{align*}
\eps=\eps(\ell)=\nu(\ell)\sqrt{\rr\ln \ell\left/\binom{\ell-1}{\rr-1}\right.},
\end{align*}
we have by Taylor expansion of $\Lambda^*_{\rho_i}$ (assuming $\eps < \min\{\rho_i,1-\rho_i\}$) that
\begin{align*}
\Lambda^*_{\rho_i}(\rho_i+\eps) 
& = (\rho_i+\eps) \ln \left(1+\frac{\eps}{\rho_i}\right) + (1 -\rho_i-\eps) \ln \left(1-\frac{\eps}{1-\rho_i}\right)\\
& = \sum_{j=1}^\infty\frac{\eps^{2j}}{(2j-1)2j}\left(\frac{1}{{\rho_i}^{2j-1}}+\frac{1}{(1-\rho_i)^{2j-1}}\right) + \sum_{j=1}^\infty\frac{\eps^{2j+1}}{2j(2j+1)}\left(\frac{1}{(1-\rho_i)^{2j}}-\frac{1}{{\rho_i}^{2j}}\right)\\
& = \frac{\eps^2}{2\rho_i(1-\rho_i)} + O(\eps^3) \ge \eps^2
\end{align*}
for $\eps$ small enough (and hence $k$ large enough).
So the probability that $A_S$ holds for some set $S \subseteq [n]$ of $\ell\ge k$ vertices is at most 
\begin{align*}
\sum_{S \subseteq [n], |S| \ge k} \Pr(A_S)
& \le \sum_{\ell = k}^n \binom{n}{\ell} \sum_{i=1}^\qq\exp\left(-\binom{\ell}{\rr}\Lambda^*_{\rho_i}(\rho_i+\eps)\right)\\ 
& \le \qq\sum_{\ell = k}^n \left(\frac{en}\ell \exp\left(-\frac{1}{\rr}\binom{\ell-1}{\rr-1}\eps^2\right) \right)^{\ell}
 \le \qq\sum_{\ell=k}^n \eta^{-\ell}< 1,
\end{align*}
where in this sequence of inequalities we have used the definition of $n$, the fact that $\ell\ge k$ and $\eta>1$, and a choice of $k$ large enough.
Thus for $k$ large enough there is a $\qq$-colouring of the edges of $K^{(\rr)}_n$ where for each $i\in[\qq]$ every vertex subset of size $\ell\ge k$ induces a subhypergraph in colour $i$ with average degree less than $f_i(\ell)$, %This holds for any $\eta>1$
 so the result follows.
\end{proof}

\section{Concluding remarks and open questions}\label{sec:remarks}
Let us introduce some notation to facilitate our discussion.
Fix $\qq \ge 2$ and let $(\rho_i)_{i=1}^\qq$ be a sequence of $\qq$ numbers in $[0,1]$. 
Given a colouring $\phi$ of the complete $\rr$-uniform hypergraph $K^{(\rr)}_n$ on vertex set $[n]$ that assigns each hyperedge a colour from $[\qq]$, we let $H_{\phi,j}$ denote the subhypergraph $([n],\phi^{-1}(j))$ induced by all hyperedges of colour $j$ for $j\in[\qq]$.
The basic question now becomes the following:
for any $k$, what is the smallest number $n:=R^{(\rr)}_{(\rho_i)_i}(k)$ such that, for any $\qq$-colouring $\phi$                                           
of the hyperedges of $K^{(\rr)}_n$, there is guaranteed to be a subset $S\subseteq [n]$ of size $\ell$ at least $k$ such that the subhypergraph $H_{\phi,j}[S]$ induced on $S$ in colour $j$ has minimum degree at least $\rho_j\binom{\ell-1}{\rr-1}$ for some $j\in[\qq]$?
We may also ask this question with average degree instead of minimum degree and denote the corresponding number $\overline{R}^{(\rr)}_{(\rho_i)_i}(k)$.
Clearly $\overline{R}^{(\rr)}_{(\rho_i)_i}(k) \le R^{(\rr)}_{(\rho_i)_i}(k)$ always.
We refer to $R^{(\rr)}_{(\rho_i)_i}(k)$ and $\overline{R}^{(\rr)}_{(\rho_i)_i}(k)$ as \emph{$\qq$-colour hypergraph quasi-Ramsey numbers}. Note that when $\sum_{i=1}^q\rho_i=q$ we retrieve the ordinary hypergraph Ramsey-numbers.

With this notation we see that for $\sum_{i=1}^\qq\rho_i<1$, Proposition~\ref{prop:linear hyper} shows that $R^{(\rr)}_{(\rho_i)_i}(k)$, and hence $\overline{R}^{(\rr)}_{(\rho_i)_i}(k)$, has linear growth in $k$; Theorem~\ref{thm:sharp hyper} precisely describes the transition from polynomial to super-polynomial growth of the $\qq$-colour hypergraph quasi-Ramsey numbers. In particular, for $\sum_{i=1}^\qq\rho_i>1$, Theorem~\ref{thm:sharp hyper}\ref{itm:sharp hyper,lower} implies that $\overline{R}^{(\rr)}_{(\rho_i)_i}(k)$, and hence $R^{(\rr)}_{(\rho_i)_i}(k)$, is at least singly exponential in $k$. 
This implies for hypergraph quasi-Ramsey numbers that, irrespective of a well-known conjecture of Erd\H{o}s, Hajnal and Rado~\cite{EHR65} (concerning the case $\sum_{i=1}^\qq\rho_i=\qq$), there must be a transition for $\rr$-uniform hypergraphs with $\rr\ge4$ from singly exponential to doubly exponential (or higher) growth in $k$ that takes place for $1<\sum_{i=1}^\qq\rho_i\leq q$. 
It would be an interesting challenge to understand the nature of this transition.

We note that if all the $\rho_i$ are uniformly bounded below $1$, Conlon, Fox and Sudakov~\cite{CFS10,CFS11} proved results that imply $R^{(\rr)}_{(\rho_i)_i}(k)$, and hence $\overline{R}^{(\rr)}_{(\rho_i)_i}(k)$, has growth that is at most singly exponential in $k$:
\begin{proposition}[Conlon, Fox and Sudakov~\cite{CFS10,CFS11}]\label{prop:CFS}
Let $\rr\ge2$. Fix $\qq \ge 2$ and $\eps>0$ and let $\rho_1=\dots=\rho_\qq = 1-\eps$. Then
\begin{align*}
R^{(\rr)}_{(\rho_i)_i}(k) =
\begin{cases} 
2^{O(k^2)} &\text{if $\qq = 3$~\cite[Theorem~2]{CFS11} and}  \\ 
2^{O(k^D)} &\text{if $\qq \ge4$~\cite[Proposition~6.3]{CFS10}}
\end{cases}
\end{align*}
where $D>0$ is a fixed constant that depends on $\rr$, $\qq$ and $\eps$.
\end{proposition}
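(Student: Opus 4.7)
My plan is to derive the proposition as a short reduction from the cited CFS upper bounds on hypergraph Ramsey-type quantities.

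The first, easy step is to observe that any monochromatic clique $K^{(\rr)}_k$ in some colour $j$ trivially realises a quasi-Ramsey configuration at density $\rho_j = 1-\varepsilon$: each vertex of the clique has degree exactly $\binom{k-1}{\rr-1}$, which certainly exceeds $(1-\varepsilon)\binom{k-1}{\rr-1}$. Thus we immediately have the upper bound $R^{(\rr)}_{(\rho_i)_i}(k) \le R^{(\rr)}_\qq(k)$, reducing matters to the corresponding Ramsey-type number on the right-hand side.

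The second step is then to invoke the CFS results: for $\qq = 3$, Theorem~2 of~\cite{CFS11} yields the singly exponential bound $2^{O(k^2)}$, and for $\qq \ge 4$, Proposition~6.3 of~\cite{CFS10} yields $2^{O(k^D)}$ for some constant $D$ depending on $\rr$, $\qq$, and $\varepsilon$. Combining these with the reduction would complete the argument.

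The main obstacle is that for $\rr, \qq \ge 3$ the classical symmetric Ramsey number $R^{(\rr)}_\qq(k)$ is known to grow at least doubly exponentially, so the naive reduction via exact monochromatic cliques cannot by itself deliver a singly exponential bound; the CFS results must therefore exploit $\varepsilon > 0$ to bound a weaker quantity. A plausible formulation is that they guarantee, in any $\qq$-colouring of $K^{(\rr)}_n$ with $n$ as in the proposition, a vertex subset $T$ of size at least $k/(1-\varepsilon/2)$ (say) in which one colour $j$ has edge density at least $1 - \varepsilon/2$. From such a $T$ one can recover the quasi-Ramsey conclusion by a standard greedy deletion: iteratively remove from $T$ any vertex whose $j$-degree in the current subset $S$ is below $(1-\varepsilon)\binom{|S|-1}{\rr-1}$, and observe via edge-counting that a positive fraction of vertices survive, leaving a subset $S$ of size at least $k$ with the required minimum-degree property. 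This conversion step costs only a constant factor in $|S|$ and a small factor in $\varepsilon$, and therefore preserves the singly exponential growth rate promised by the CFS input.
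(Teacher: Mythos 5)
The paper offers no proof of this proposition --- it is stated purely as a cited result of Conlon--Fox--Sudakov, with the phrase ``proved results that imply'' making clear that only a short reduction is intended, not spelled out. So there is no proof in the paper to compare against; I can only assess your reduction on its own merits.

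Your first paragraph (reducing $R^{(\rr)}_{(\rho_i)_i}(k)$ to the classical Ramsey number $R^{(\rr)}_\qq(k)$ via exact monochromatic cliques) is, as you yourself observe, a dead end for $\rr\ge 3$ and contributes nothing; it should be cut. The second half correctly identifies the actual mechanism: the cited CFS results are about \emph{dense} (rather than perfectly monochromatic) subsets --- in any $\qq$-colouring of $K^{(\rr)}_n$ with $n$ singly exponential in a polynomial of $k$, one can find a vertex set on which some colour class has hyperedge density at least $1-\varepsilon'$ for a chosen $\varepsilon'>0$ --- and a greedy vertex-deletion argument, exactly of the kind already used in the proof of Proposition~\ref{prop:linear hyper}, converts high density into high minimum degree. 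The one quantitative slip is your factor $k/(1-\varepsilon/2)$: starting from a set of size $N$ with colour-$j$ density at least $1-\varepsilon/2$ and repeatedly deleting any vertex of $j$-degree below $(1-\varepsilon)\binom{n-1}{\rr-1}$, the edge count forces termination once $\binom{N-t}{\rr}\ge\tfrac12\binom{N}{\rr}$, so the surviving set has size at least $c_{\rr}N$ with $c_{\rr}\approx 2^{-1/\rr}$; one therefore needs $N\ge k/c_{\rr}$, not $N\ge k/(1-\varepsilon/2)$. This is harmless, since a constant-factor adjustment of $k$ is absorbed into the $O(\cdot)$ in the exponent. With the first paragraph removed and the constant corrected, the reduction is a valid (if necessarily schematic, since you don't quote the precise CFS statements) account of how the cited results imply the proposition.
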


Along these lines, a first question to resolve is perhaps whether a strengthening of Proposition~\ref{prop:CFS} holds: given $\rr\ge3$, $\qq\ge2$ and $\eps>0$, is there some $D$ such that $\ln R^{(\rr)}_{(\rho_i)_i}(k) = O(k^D)$ if $\sum_{i=1}^\qq\rho_i < \qq-\eps$?
Or could it instead be the case, say, that, given $\rr\ge4$, $\qq\ge2$ and $\eps>0$, there is some $D>0$ such that $\ln\ln \overline{R}^{(\rr)}_{(\rho_i)_i}(k) =\Omega(k^D)$ if $\rho_1=1$ and $\rho_i =\eps/(\qq-1)$ for $i\in\{2,\dots,\qq\}$?
These questions can be considered part of a refinement of a problem of Erd\H{o}s (cf.~\cite[pp.~21--22]{NeRo90}), a problem he described as ``interesting and mysterious'' and for whose solution he offered \$500. 
Borrowing his intuition, it might be more natural to believe that the answer to the first question is `yes' and to the second `no'.
On the other hand, in light of a result of Erd\H{o}s and Hajnal that was mentioned in the introduction, the answers could depend on $\rr$ and $\qq$.

\subsection*{Acknowledgement}
We would like to thank Maria Axenovich for suggesting the study of multi-colour quasi-Ramsey numbers.
We are moreover grateful to an anonymous referee for valuable comments, significantly improving the presentation of the paper.

\bibliographystyle{abbrv}
\bibliography{multiqr}

\begin{thebibliography}{10}

\bibitem{BoSc06}
B.~Bollob{\'a}s and A.~D. Scott.
\newblock Discrepancy in graphs and hypergraphs.
\newblock In {\em More sets, graphs and numbers}, volume~15 of {\em Bolyai Soc.
  Math. Stud.}, pages 33--56. Springer, Berlin, 2006.

\bibitem{CFS10}
D.~Conlon, J.~Fox, and B.~Sudakov.
\newblock Hypergraph {R}amsey numbers.
\newblock {\em J. Amer. Math. Soc.}, 23(1):247--266, 2010.

\bibitem{CFS11}
D.~Conlon, J.~Fox, and B.~Sudakov.
\newblock Large almost monochromatic subsets in hypergraphs.
\newblock {\em Israel J. Math.}, 181:423--432, 2011.

\bibitem{DeZe98}
A.~Dembo and O.~Zeitouni.
\newblock {\em Large {D}eviations {T}echniques and {A}pplications}, volume~38
  of {\em Applications of Mathematics (New York)}.
\newblock Springer-Verlag, New York, 2nd edition, 1998.

\bibitem{Erd45}
P.~Erd\H{o}s.
\newblock On a lemma of {L}ittlewood and {O}fford.
\newblock {\em Bull. Amer. Math. Soc.}, 51:898--902, 1945.

\bibitem{Erd47}
P.~Erd{\H{o}}s.
\newblock Some remarks on the theory of graphs.
\newblock {\em Bull.~Amer.~Math. Soc.}, 53:292--294, 1947.

\bibitem{EGPS88}
P.~Erd{\H{o}}s, M.~Goldberg, J.~Pach, and J.~Spencer.
\newblock Cutting a graph into two dissimilar halves.
\newblock {\em J. Graph Theory}, 12(1):121--131, 1988.

\bibitem{EHR65}
P.~Erd{\H{o}}s, A.~Hajnal, and R.~Rado.
\newblock Partition relations for cardinal numbers.
\newblock {\em Acta Math. Acad. Sci. Hungar.}, 16:93--196, 1965.

\bibitem{ErPa83}
P.~Erd{\H{o}}s and J.~Pach.
\newblock On a quasi-{R}amsey problem.
\newblock {\em J. Graph Theory}, 7(1):137--147, 1983.

\bibitem{ErSp72}
P.~Erd{\H{o}}s and J.~Spencer.
\newblock Imbalances in {$k$}-colorations.
\newblock {\em Networks}, 1:379--385, 1971/72.

\bibitem{ErSp74}
P.~Erd{\H{o}}s and J.~Spencer.
\newblock {\em Probabilistic Methods in Combinatorics}.
\newblock Probability and Mathematical Statistics, A Series of Monographs and
  Textbooks. Academic Press, New York and London, 1974.

\bibitem{ErSz35}
P.~Erd{\H{o}}s and G.~Szekeres.
\newblock A combinatorial problem in geometry.
\newblock {\em Compositio Math.}, 2:463--470, 1935.

\bibitem{FMV15+}
V.~Falgas-Ravry, K.~Markström, and J.~Verstraëte.
\newblock Full subgraphs.
\newblock {\em Journal of Graph Theory}, to appear.

\bibitem{GRS80}
R.~L. Graham, B.~L. Rothschild, and J.~H. Spencer.
\newblock {\em Ramsey theory}.
\newblock John Wiley \& Sons, Inc., New York, 1980.
\newblock Wiley-Interscience Series in Discrete Mathematics, A
  Wiley-Interscience Publication.

\bibitem{KLPR14+}
R.~J. Kang, E.~Long, V.~Patel, and G.~Regts.
\newblock On a {R}amsey-type problem of {E}rd{\H o}s and {P}ach.
\newblock {\em Bulletin of the London Mathematical Society}, to appear.

\bibitem{KaMc10}
R.~J. Kang and C.~McDiarmid.
\newblock The {$t$}-improper chromatic number of random graphs.
\newblock {\em Combin.~Probab.~Comput.}, 19(1):87--98, 2010.

\bibitem{KPPR15}
R.~J. Kang, J.~Pach, V.~Patel, and G.~Regts.
\newblock A precise threshold for quasi-{R}amsey numbers.
\newblock {\em SIAM J. Discrete Math.}, 29(3):1670--1682, 2015.

\bibitem{LSV86}
L.~Lov{\'a}sz, J.~Spencer, and K.~Vesztergombi.
\newblock Discrepancy of set-systems and matrices.
\newblock {\em European J. Combin.}, 7(2):151--160, 1986.

\bibitem{MaVonotes}
J.~Matou\v{s}ek and J.~Vondr\'ak.
\newblock The {P}robabilistic {M}ethod.
\newblock Downloaded from
  \url{http://kam.mff.cuni.cz/~matousek/lectnotes.html}, March 2008.

\bibitem{McD89}
C.~McDiarmid.
\newblock On the method of bounded differences.
\newblock {\em Surveys in combinatorics}, 141(1):148--188, 1989.

\bibitem{NeRo90}
J.~Ne{\v{s}}et{\v{r}}il and V.~R{\"o}dl, editors.
\newblock {\em Mathematics of {R}amsey theory}, volume~5 of {\em Algorithms and
  Combinatorics}.
\newblock Springer-Verlag, Berlin, 1990.

\bibitem{Ram30}
F.~P. Ramsey.
\newblock On a problem of formal logic.
\newblock {\em Proc.~of the London Math.~Soc.~Ser.~2}, 30(1):264--286, 1930.

\bibitem{Spe85}
J.~Spencer.
\newblock Six standard deviations suffice.
\newblock {\em Trans. Amer. Math. Soc.}, 289(2):679--706, 1985.

\end{thebibliography}

\end{document}